\theoremstyle{plain}
\newtheorem{theorem}{Theorem}
\newtheorem{lemma}{Lemma}
\newtheorem{corollary}{Corollary}
\theoremstyle{proof}
\theoremstyle{definition}
\theoremstyle{remark}
\theoremstyle{lamma}
\numberwithin{equation}{section}
\numberwithin{lemma}{section}
\numberwithin{theorem}{section}
\numberwithin{remark}{section}
\numberwithin{prop}{section}
\numberwithin{corollary}{section}
\theoremstyle{thmrm}
\numberwithin{conjecture}{section}
\begin{document}
\title[Cyoclotomic numbers of order $2\ell^{2}$]{\large{Cyoclotomic numbers of order $2\ell^{2}$ with prime $\ell$}}
\author{Md Helal Ahmed, Jagmohan Tanti and Azizul Hoque}
\address{Md Helal Ahmed @ Centre for Applied Mathematics, Central University of Jharkhand, Ranchi-835205, India}
\email{ahmed.helal@cuj.ac.in}

\address{Jagmohan Tanti @ Centre for Applied Mathematics, Central University of Jharkhand, Ranchi-835205, India}
\email{jagmohan.t@gmail.com}

\address{Azizul Hoque @Azizul Hoque, Harish-Chandra Research Institute,
Chhatnag Road, Jhunsi,  Allahabad 211 019, India.}
\email{ azizulhoque@hri.res.in}

\keywords{Cyclotomic numbers; Dickson-Hurwitz sums; Jacobi sums; Congruences; Finte fields}
\subjclass[2010] {Primary: 11T24, Secondary: 11T22}
\maketitle
\begin{abstract}
The problem of determining cyclotomic numbers in terms of the solutions of certain Diophantine systems has been treated by many authors since the age of 
Gauss. In this paper we obtain an explicit expression for cyclotomic numbers of order $2\ell^{2}$ in terms of the coefficients of the Jacobi sums of 
lower orders. At the end, we illustrate the nature of two matrices corresponding to two types of cyclotomic numbers.
\end{abstract}
\section{Introduction}
Let $p$ be an odd prime integer and $q=p^r$ with $r\geq 1$ an integer. Let $e$ be a non-trivial divisor of $q-1$. Then $q=ek+1$ for some positive integer $k$. 
Suppose $\gamma$ is a generator of the cyclic group $\mathbb{F}^{\times}_{q}$. For a primitive $e$-th root $\zeta_e$ of unity, define a multiplicative 
character $\chi_e$ on $\mathbb{F}^{\times}_{q}$ by $\chi_e(\gamma)=\zeta_e$.
For $0\leq i, j\leq e-1$, the cyclotomic numbers $(i, j)_e$ of order $e$ are defined as follows:
$$
(i,j)_e:=\#\{v\in\mathbb{F}_q\setminus \{0,-1\}\mid {\rm ind}_{\gamma}v\equiv a \pmod e ,  {\ \rm ind}_{\gamma}(v+1)\equiv b \pmod e\}.
$$
We now extent $\chi_e$ to a map from $\mathbb{F}_q$ to $\mathbb{Q}({\zeta_e})$ by taking $\chi_e(0)=0$. The Jacobi sums of order $e$ is defined by 
$$
J_e(i,j)= \sum_{v\in \mathbb{F}_q} \chi_e^i(v) \chi_e^j(v+1).
$$
The cyclotomic numbers $(i,j)_e$ and the Jacobi sums $J_e(i,j)$ are well connected  by the following relations:
\begin{equation} \label{01}
 \sum_i\sum_j(i,j)_e\zeta_e^{ai+bj}=J_e(a,b),
 \end{equation} 
 and
 \begin{equation} \label{00}
 \sum_i\sum_j\zeta_e^{-(ai+bj)}J_e(i,j)=e^{2}(a,b)_e.
 \end{equation} 
\eqref{01} and \eqref{00} show that if we want to calculate all the cyclotomic numbers $(i,j)_e$ of order $e$, it is sufficient to calculate all the 
Jacobi sums $J_e(i,j)$ of the same order, and vice-versa. 

Cyclotomic numbers are one of the most important objects in number theory and in other branches of mathematics. These number have been extensively used in
coding theory, cryptography and in other branches of information theory. One of the central problems in the study of these numbers is the determination of
all cyclotomic numbers of a specific order for a given field in terms of solutions of certain Diophantine system.
This problem has been treated by many mathematicians including C. F. Gauss who had determined all the cyclotomic numbers of order $3$ in the field 
$\mathbb{F}_q$ with prime $q\equiv 1\pmod 3$.  Cyclotomic numbers of order $e$ over the field $\mathbb{F}_q$ with characteristic $p$, in general, can not 
be determined only in terms of $p$ and $e$, but that one requires a quadratic partition of $q$ too.

Complete solutions to this cyclotomic number problem have been computed for some specific orders. For instance, the cyclotomic numbers of prime order $e$ 
in the finite field $\mathbb{F}_q$ with $q=p^r$ and $p\equiv 1 \pmod e$ have been investigated by many authors (see, \cite{KR1985} and the references 
therein). Cyclotomic numbers of small composite order have been investigated by many authors, but most of the results involve the classical ambiguity. 
The problem of removal of this ambiguity may led to composite moduli. The first result in composite case is due to L. E. Dickson \cite{DI1935} who had 
computed cyclotomic numbers of orders $4, 6, 8, 10$ and $12$ over the field $\mathbb{F}_q$ for $r=1$. 
In \cite{ST1967}, the author showed that the cyclotomic numbers of orders $3, 4, 6$ over the field $\mathbb{F}_q$ are well determined by a unique 
representation of $p^r$ in terms of a particular binary quadratic form $x^2+dy^2$, and that of order $8$ are determined by two such forms.         
L. E. Dickson also determined the cyclotomic numbers of orders $ 14, 22$ in \cite{DI1935-2}, and that of orders $9, 15, 16, 18, 20, 24$ in \cite{DI1935-3} 
over the field $\mathbb{F}_q$ for $r=1$. Following the techniques of \cite{DI1935}, A. L. Whiteman \cite{WH1960} computed the cyclotomic numbers of order 
$12$ over the field $\mathbb{F}_q$ for $r=1$. He also computed the cyclotomic numbers of order $16$ over the same field in \cite{WH1957}. These cyclotomic 
numbers of order $16$ were also determined independently by E. Lehmer in \cite{LE1954} and  R. J. Evans and J. R. Hill in \cite{EH1979}. 
J. B. Muskat and A. L. Whiteman gave the complete theory for the cyclotomic numbers of order $20$ in \cite{MW1970}. J. B. Muskat analysed completely 
the cyclotomic numbers of order $14$ in \cite{MU1966} and that of orders $24, 30$ in \cite{MU1968}. In \cite{FMSW1986}, the authors discussed the 
cyclotomic numbers of order $15$ over the field $\mathbb{F}_{q}$ for $q=p^2$ when $p\equiv 4, 11\pmod{15}$. 
For $\ell$ an odd prime, Katre and Rajwade \cite{KR1985} solved the cyclotomy problem of order $\ell$ whereas Acharya and Katre \cite{AK1995} determined 
the cyclotomic numbers of order $2\ell$ over the field $\mathbb{F}_q$ for $q=p^r$ with the prime $p\equiv 1\pmod \ell$ in terms of the solutions of certain 
Diophantine systems.
Recently  Shirolkar and  Katre \cite{SK2011} determined the cyclotomic numbers of order $\ell^2$ with prime $\ell$ in terms of the coefficients of 
Jacobi sums of orders $\ell$ and $\ell^2$. 

In this paper, We obtain a formula for the cyclotomic numbers of order $2\ell^2$ over the finite field $\mathbb{F}_q$ for 
$q=p^r\equiv 1\pmod {2\ell^2}$ in terms of the coefficients of Jacobi sums of orders $\ell$, $2\ell$, $\ell^2$ and $2\ell^2$. These coefficients can be 
evaluated in terms of the Dickson-Hurwitz sums of of orders $\ell$, $2\ell$, $\ell^2$ and $2\ell^2$. We also discuss the nature of two types of cyclotomic 
numbers in terms of their associated matrix representations.

\section{Preliminaries}
For an odd prime $\ell$, let $p$ be a prime and $q=p^r$ with an integer $r\geq 1$ satisfying $q\equiv 1\pmod {2\ell^2}$. We write $q=2\ell^2k+1$ for 
some positive integer $k$. In next two subsequent subsection we state some well known properties \cite{SK2011} of cyclotomic numbers and Dickson-Hurwitz sums which are 
needed to give an explicit expression for cyclotomic numbers of order $2\ell^{2}$
\subsection{Properties of Cyclotomic Numbers}
\begin{enumerate}
 \item  $(a,b)_{2\ell^2}=(a' ,b')_{2\ell^2}$ whenever $a\equiv a'\pmod{2\ell^2}$ and $b\equiv b'\pmod{2\ell^2}$.\\
 \item $(a,b)_{2\ell^2}= (2\ell^2-a,b-a)_{2\ell^{2}}$ along with the following:
\begin{equation*}\label{2.1}
(a,b)_{2\ell^{2}}=\begin{cases}
 (b,a)_{2\ell^{2}}\hspace*{1.751cm} \text{ if }  k \text{ is  even  or  } q=2^r,\\
(b+\ell^{2},a+\ell^{2})_{2\ell^{2}}\hspace*{2mm} \text{ if } k \text{ is odd}.
\end{cases}. 
\end{equation*}
\item \begin{equation} \label{2.2}
\sum_{a=0}^{2\ell^{2}-1}\sum_{b=0}^{2\ell^{2}-1}(a,b)_{2\ell^{2}}=q-2.
\end{equation}
\item \begin{equation*} \label{2.3}
\sum_{b=0}^{2\ell^{2}-1}(a,b)_{2\ell^{2}}=k-n_{a},
\end{equation*}
where $n_{a}$ is given by 
\begin{equation*}
n_{a}=\begin{cases}
1 \quad \text{ if } a=0, 2\mid k \text{ or if } a=\ell^2, 2\nmid k;\\
0 \quad \text{otherwise}.
\end{cases}
\end{equation*}
\item \begin{equation} \label{2.4}
\sum_{a=0}^{2\ell^{2}-1}(a,b)_{2\ell^{2}}=\begin{cases}
k-1 \hspace*{3mm}\text{ if } b =0; \\ 
k \hspace*{1cm}\text{ if } 1\leq b \leq 2\ell^{2}-1.
\end{cases}
\end{equation}
\end{enumerate}

\subsection{Properties of Dickson-Hurwitz sums}
Dickson-Hurwitz sums, $B_{2\ell^2}(a,b)$ \cite{ZE1971} of order $2\ell^2$ are defined for two positive integers $a$ and $ b$ modulo $2\ell^2$ by
\begin{equation} \label{2.5}
B_{2l^{2}}(a,b)=\sum_{h=0}^{2\ell^{2}-1}(h,a-bh)_{2\ell^{2}}.
\end{equation}
\begin{enumerate}
 \item \begin{equation}\label{2.6}
\sum_{a=0}^{2l^{2}-1}B_{2l^{2}}(a,b)=q-2.
\end{equation}
\item \begin{equation}\label{2.7}
B_{2l^{2}}(a,0)=\begin{cases}
k-1 \hspace*{3mm}\text{ if } a =0; \\ 
k \hspace*{1cm}\text{ if } 1\leq a \leq 2\ell^{2}-1.
\end{cases}
\end{equation}
\item \begin{align}\label{xx}
B_{2\ell^{2}}(\ell^{2}+k,n) &\equiv \sum_{a=0}^{2\ell^{2}-1}B_{2\ell^{2}}(a-\ell,n)\pmod{2\ell^{2}}\nonumber \\
&\equiv \sum_{a=0}^{2\ell^{2}-1}B_{2\ell^{2}}(a,n)\pmod{2\ell^{2}}\nonumber\\
&\equiv q-2\pmod {2\ell^2},
\end{align}
where $0\leq k \leq \phi(2\ell^{2})-1$.
\end{enumerate}
\subsection{Relations between Jacobi Sums and Dickson-Hurwitz Sums}
For any positive integers $m$ and $n$ modulo $2\ell^2$, we recall the following relation (see, relation (8) in {\cite{DI1935-2}}),
\begin{equation} \label{2.8}
J_{2l^{2}}(m,n)=\sum_{a=0}^{2\ell^{2}-1}\zeta_{2\ell^2}^{na}\sum_{b=0}^{2\ell^{2}-1}\zeta_{2\ell^2}^{-(m+n)b}(a,b)_{2\ell^{2}},
\end{equation}
where $\zeta_{2\ell^2}$ is a primitive $2\ell^{2}$-th root of unity.

We also recall (see, relation (9) in {\cite{DI1935-2}}),
\begin{equation} \label{2.9}
J_{2\ell^{2}}(m,n)=(-1)^{nk}J_{2\ell^{2}}(-m-n,n).
\end{equation}
Puttig $m=2$ in \eqref{2.9} and $m=-2-n$ in \eqref{2.8}, we obtain respectively:
\begin{equation*}
J_{2\ell^{2}}(2,n)=(-1)^{nk}J_{2\ell^{2}}(-2-n,n)
\end{equation*}
and
\begin{equation*}
J_{2\ell^{2}}(-2-n,n)=\sum_{a=0}^{2\ell^{2}-1}\sum_{b=0}^{2\ell^{2}-1}\zeta_{2\ell^2}^{na+2b}(a,b)_{2\ell^{2}}.
\end{equation*}

Eliminating $b$ by the use of $na+2b\equiv i \pmod {\ell^{2})}$, we obtain
\begin{equation*}
J_{2\ell^{2}}(-2-n,n)=\sum_{i=0}^{\ell^{2}-1}\sum_{a=0}^{\ell^{2}-1}\zeta_{\ell^2}^{i}\big(a,\frac{i-na}{2}\big)=(-1)^{nk} J_{2\ell^{2}}(2,n),
\end{equation*}
Writing $B_{\ell^{2}}(i,n)=\sum_{a=0}^{\ell^{2}-1}(a,(i-na)/2)$, we get

\begin{equation} \label{2.10}
(-1)^{nk}J_{2\ell^{2}}(2,n)=\sum_{i=0}^{\ell^{2}-1}B_{\ell^{2}}(i,n)\zeta_{\ell^2}^{i}.
\end{equation}

where $\zeta_{\ell^2}$ is a $\ell^{2}$-th root of unity.\\
Also from \cite{DI1935-2}, we have
\begin{equation}\label{2.11}
(-1)^{nk}J_{2\ell^{2}}(1,n)=\sum_{i=0}^{2\ell^{2}-1}B_{2\ell^{2}}(i,n)\zeta_{2\ell^2}^{i}.
\end{equation}
\section{Some results on Dickson-Hurwitz sums}
In this section, we determine the trace of $J_{2\ell^2}(1,n)\zeta_{2\ell^2}^{-t}$, $J_{2\ell^2}(2, n)\zeta_{2\ell^2}^{-t}$ and also obtain a result that 
shows a relation between the coefficients $d_{i,n}$ and Dickson-Hurwitz sums of order $2\ell^{2}$ which are needed in the subsequent sections.  
Let $\zeta_\ell$ be a primitive $\ell$-th root of unity. Then 
$Tr_{\mathbb{Q}(\zeta_{2\ell^2})/\mathbb{Q}}(\zeta_\ell)=-\ell$,  $ Tr_{\mathbb{Q}(\zeta_{2\ell^2})/\mathbb{Q}}(\zeta_{\ell^2})=\ell$ and 
$Tr_{\mathbb{Q}(_{2\ell^2})/\mathbb{Q}}(\zeta_{2\ell^2})=0$.
Then using \eqref{2.11} we compute for a positive integer $t$,
\begin{align}\label{2.12}
&Tr_{\mathbb{Q}(\zeta_{2\ell^2})/\mathbb{Q}}(J_{2\ell^{2}}(1,n)\zeta_{2\ell^2}^{-t})
=\sum_{i=0}^{2\ell^{2}-1}B_{2\ell^2}(i,n)Tr_{\mathbb{Q}(\zeta_{2\ell^2})/\mathbb{Q}}(\zeta_{2\ell^2}^{i-t})\nonumber\\
&= \ell(\ell-1)B_{2\ell^2}(t,n)+\sum_{\substack{i=0\\ i\neq t}}^{2\ell^{2}-1}B_{2\ell^2}(i,n)Tr_{\mathbb{Q}(\zeta_{2\ell^2})/\mathbb{Q}}(\zeta_{2\ell^2}^{i-t})\nonumber\\
& = \ell(\ell-1)B_{2\ell^2}(t,n)-\ell\sum_{u=1}^{2l-1}B_{2\ell^2}(\ell u+t,n)+\ell\sum_{u=1}^{\ell-1}B_{2\ell^2}(2\ell u+t,n).
\end{align}
Similary using \eqref{2.10}, we obtain
\begin{align}\label{2.13}
Tr_{\mathbb{Q}(\zeta_{\ell^2})/\mathbb{Q}}(J_{2\ell^{2}}(2,n)\zeta_{\ell^2}^{-t})
&=\sum_{i=0}^{\ell^{2}-1}B_{\ell^2}(i,n)Tr_{\mathbb{Q}(\zeta_{\ell^2})/\mathbb{Q}}(\zeta_{\ell^2}^{i-t})\nonumber\\
&=\ell(\ell-1)B_{\ell^2}(t,n)-l\sum_{u=1}^{l-1}B_{\ell^2}(lu+t,n).
\end{align}
Recently, we \cite{AT2018} determined the coefficients of Jacobi sums of order $2\ell^{2}$ in terms of Dickson-Hurwitz sums of the same order. More 
precisely, we proved:
\begin{theorem} \label{AT1}
Let $\ell$ be an odd prime. Let $p$ be a prime such that $q=p^{r}\equiv 1 \pmod {2\ell^{2}}$ with an integer $r\geq 1$. Then for any positive integer $n$,
$$J_{2\ell^{2}}(1,n)=\sum_{i=0}^{\ell(\ell-1)-1} d_{i,n}\zeta_{2\ell^2}^{i},$$
where the coefficents $d_{i, n}$ are given by 
$$
d_{i,n}=B_{2\ell^{2}}(i,n)\mp B_{2\ell^{2}}(\ell(\ell-1)+j,n)-B_{2\ell^{2}}(\ell^{2}+k,n)\pm B_{2\ell^{2}}((2\phi(\ell^{2})+\ell)+j,n),
 $$
satisfying $ 0\leq j \leq \ell-1$, $j\equiv i \pmod { 2\ell^{2}}$, $k\equiv i \pmod{2\ell^{2}}$, $0\leq i, k \leq \phi(2\ell^{2})-1$.
\end{theorem}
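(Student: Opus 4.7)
The starting point of the argument is relation \eqref{2.11}, which expresses $(-1)^{nk}J_{2\ell^{2}}(1,n)$ as a $\mathbb{Z}$-linear combination of \emph{all} powers $\zeta_{2\ell^{2}}^{i}$, $0\le i\le 2\ell^{2}-1$, with Dickson--Hurwitz coefficients $B_{2\ell^{2}}(i,n)$. The claimed formula rewrites $J_{2\ell^{2}}(1,n)$ in the integral basis $\{\zeta_{2\ell^{2}}^{i}\colon 0\le i\le \phi(2\ell^{2})-1\}$ of $\mathbb{Z}[\zeta_{2\ell^{2}}]$ over $\mathbb{Z}$, where $\phi(2\ell^{2})=\ell(\ell-1)$. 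The plan is therefore to reduce the $2\ell^{2}$-term sum on the right of \eqref{2.11} to this basis by means of the two defining relations of $\zeta_{2\ell^{2}}$: the elementary identity $\zeta_{2\ell^{2}}^{\ell^{2}}=-1$, and the minimal polynomial identity
\[
\Phi_{2\ell^{2}}(\zeta_{2\ell^{2}})=\sum_{j=0}^{\ell-1}(-1)^{j}\zeta_{2\ell^{2}}^{j\ell}=0,
\]
which holds for odd $\ell$ since $\Phi_{2\ell^{2}}(x)=\Phi_{\ell^{2}}(-x)$. Once the reduction is carried out, the outer factor $(-1)^{nk}$ is transferred to the coefficients.

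The key organisational step is to partition the index range $[0,2\ell^{2}-1]$ into four consecutive blocks of sizes $\ell(\ell-1)$, $\ell$, $\ell(\ell-1)$, $\ell$, namely $I_{1}=[0,\ell(\ell-1)-1]$, $I_{2}=[\ell(\ell-1),\ell^{2}-1]$, $I_{3}=[\ell^{2},\ell^{2}+\ell(\ell-1)-1]$, $I_{4}=[2\ell^{2}-\ell,2\ell^{2}-1]$. Terms with $i\in I_{1}$ are already in the basis and contribute the leading $B_{2\ell^{2}}(i,n)$. Terms with $i=\ell^{2}+k\in I_{3}$ are reduced by $\zeta_{2\ell^{2}}^{\ell^{2}+k}=-\zeta_{2\ell^{2}}^{k}$, producing the term $-B_{2\ell^{2}}(\ell^{2}+k,n)$ in $d_{k,n}$. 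For $i=\ell(\ell-1)+m\in I_{2}$ with $m\in[0,\ell-1]$, the shifted cyclotomic identity
\[
\zeta_{2\ell^{2}}^{\ell(\ell-1)+m}=-\sum_{q=0}^{\ell-2}(-1)^{q}\zeta_{2\ell^{2}}^{q\ell+m}
\]
expresses $\zeta_{2\ell^{2}}^{i}$ as a $\mathbb{Z}$-linear combination of basis elements; this yields the term $\mp B_{2\ell^{2}}(\ell(\ell-1)+j,n)$ in $d_{i,n}$, with sign $(-1)^{q+1}$ where $i=q\ell+j$. Finally, indices $i\in I_{4}$ are first pushed back into $I_{2}$ by $\zeta_{2\ell^{2}}^{\ell^{2}}=-1$ and then expanded by the same cyclotomic identity; the double sign flip explains both the appearance of the index $2\phi(\ell^{2})+\ell+j=2\ell^{2}-\ell+j$ and why this contribution carries the sign \emph{opposite} to that of the $I_{2}$ contribution, accounting for the $\pm$ paired with the earlier $\mp$.

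Collecting the coefficient of each $\zeta_{2\ell^{2}}^{i}$, with $i=q\ell+j$, $0\le j\le\ell-1$, $0\le q\le\ell-2$, then yields exactly the four-term expression for $d_{i,n}$ stated in the theorem. I expect the main technical difficulty to lie in this sign bookkeeping: one must combine the outer factor $(-1)^{nk}$ from \eqref{2.11}, the $-1$ from $\zeta_{2\ell^{2}}^{\ell^{2}}=-1$, and the alternating $(-1)^{q+1}$ from the cyclotomic reduction, and verify that their combined effect is precisely the alternation $\mp\,\cdots\,\pm$ in the statement (with the two signs opposite to one another and depending on the parity of $q=\lfloor i/\ell\rfloor$). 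Once this sign analysis is in place, the identity follows by linearity, and no further arithmetic is needed.
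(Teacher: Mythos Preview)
The paper does not actually prove this statement: Theorem~\ref{AT1} is quoted from the companion paper \cite{AT2018} (see the sentence immediately preceding it), so there is no in-paper proof against which to compare your proposal.

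That said, your outline is the natural proof and almost certainly the one carried out in \cite{AT2018}. Starting from \eqref{2.11}, one has a $\mathbb{Z}$-linear combination of all $2\ell^{2}$ powers of $\zeta_{2\ell^{2}}$, and the task is exactly to rewrite it in the integral basis $\{\zeta_{2\ell^{2}}^{i}:0\le i\le \ell(\ell-1)-1\}$ using the two relations $\zeta_{2\ell^{2}}^{\ell^{2}}=-1$ and $\Phi_{2\ell^{2}}(\zeta_{2\ell^{2}})=\sum_{q=0}^{\ell-1}(-1)^{q}\zeta_{2\ell^{2}}^{q\ell}=0$. Your four-block decomposition $I_{1},\ldots,I_{4}$ and the resulting contributions are correct; in particular the $I_{2}$ block contributes with sign $(-1)^{q+1}$ and the $I_{4}$ block with sign $(-1)^{q}$ to the coefficient at $i=q\ell+j$, which is exactly the paired $\mp/\pm$ in the statement.

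One point you flag but do not fully resolve: the factor $(-1)^{nk}$ from \eqref{2.11} multiplies the \emph{entire} expansion, not only the second and fourth terms, so it cannot literally be ``transferred'' into the $\mp/\pm$ alone. The statement of Theorem~\ref{AT1} as written in this paper suppresses this global sign (and is in any case somewhat informal about what $\mp/\pm$ mean), so the cleanest formulation of what your argument proves is
\[
(-1)^{nk}J_{2\ell^{2}}(1,n)=\sum_{i=0}^{\ell(\ell-1)-1}\Bigl(B_{2\ell^{2}}(i,n)+(-1)^{q+1}B_{2\ell^{2}}(\ell(\ell-1)+j,n)-B_{2\ell^{2}}(\ell^{2}+i,n)+(-1)^{q}B_{2\ell^{2}}(2\ell^{2}-\ell+j,n)\Bigr)\zeta_{2\ell^{2}}^{i},
\]
with $i=q\ell+j$. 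This is the substantive content, and your derivation of it is correct.
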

The next result gives a relation between the coeffients $d_{i,n}$ and Dickson-Hurwitz sums of order $2\ell^2$. This result is useful to determine the cyclotomic numbers of order $2\ell^2$. Here, we prove:
\begin{theorem}\label{thm1}
Let $\ell$ be an odd prime, and $t$ and $n$ be two positive integers modulo $2\ell^{2}$. Then 
\begin{eqnarray*}
& \ell(\ell-1)B_{2\ell^2}(t,n)-\ell\sum\limits_{u=1}^{2\ell-1}B_{2\ell^2}(\ell  u+t,n)+\ell\sum\limits_{u=1}^{\ell-1}B_{2\ell^2}(2\ell u+t,n)\\
&= \ell(\ell-1)d_{t,n}-\ell\sum\limits_{u=\ell-1}^{2\ell-3}d_{u\ell+t,n}-\ell(q-2),
\end{eqnarray*}
where $t=j\ell+s$ with $0 \leq j \leq 2\ell-1$ and $0 \leq s \leq \ell-1$. 
\end{theorem}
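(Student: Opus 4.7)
The plan is to evaluate
\[
T := Tr_{\mathbb{Q}(\zeta_{2\ell^2})/\mathbb{Q}}(J_{2\ell^{2}}(1,n)\zeta_{2\ell^2}^{-t})
\]
in two different ways. By \eqref{2.12}, $T$ already equals the LHS of the theorem, so the task reduces to producing a second expression for $T$ that matches the claimed RHS.

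For that second expression, I would substitute Theorem~\ref{AT1}, which writes $J_{2\ell^{2}}(1,n)=\sum_{i=0}^{\ell(\ell-1)-1}d_{i,n}\zeta_{2\ell^2}^{i}$ in the canonical power basis of $\mathbb{Q}(\zeta_{2\ell^2})$. Multiplying by $\zeta_{2\ell^2}^{-t}$ and taking the trace term by term gives
\[
T = \sum_{i=0}^{\ell(\ell-1)-1} d_{i,n}\, Tr_{\mathbb{Q}(\zeta_{2\ell^2})/\mathbb{Q}}(\zeta_{2\ell^2}^{i-t}).
\]
The trace of $\zeta_{2\ell^2}^{m}$ vanishes unless $\ell\mid m$. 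Writing $t=j\ell+s$, the only surviving indices are those of the form $i=v\ell+s$ with $v\in[0,\ell-2]$, and for such an $i$ the trace depends only on $w:=(v-j)\bmod 2\ell$: it equals $\ell(\ell-1)$ when $w=0$, $-\ell(\ell-1)$ when $w=\ell$, $+\ell$ when $w$ is odd and $w\ne\ell$, and $-\ell$ when $w$ is even and $w\ne0$. These are exactly the numerics already used in the derivation of \eqref{2.12}.

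The enumeration now proceeds case by case. The diagonal case $v=j$, when $j\in[0,\ell-2]$, yields the term $\ell(\ell-1)d_{t,n}$, while each of the other $v\in[0,\ell-2]$ contributes $\pm\ell\, d_{v\ell+s,n}$ with sign dictated by the parity of $v-j$. Collapsing the odd- and even-parity blocks exactly as on the $B$-side of \eqref{2.12}, so that the overlap between the ``$+\ell$ on even indices'' and the ``$-\ell$ on all multiples of $\ell$'' cancels on the even block, the off-diagonal part consolidates into $-\ell$ times a single sum of $d$-symbols. A change of variable $u=v-j$ modulo $2\ell$, together with the cyclotomic relation $\Phi_{2\ell^2}(\zeta_{2\ell^2})=0$ applied to reduce any $d$-symbol whose index falls outside $[0,\ell(\ell-1)-1]$ back into the canonical range, recasts this off-diagonal sum as $-\ell\sum_{u=\ell-1}^{2\ell-3}d_{u\ell+t,n}$.

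The residual $-\ell(q-2)$ term emerges from the cyclotomic reductions themselves. Each such reduction invokes the explicit formula for $d_{i,n}$ in Theorem~\ref{AT1}, which contains a $-B_{2\ell^{2}}(\ell^{2}+k,n)$ summand; by \eqref{xx} this is $\equiv q-2\pmod{2\ell^{2}}$, and aggregating these contributions with the prevailing coefficient $-\ell$ produces precisely the term $-\ell(q-2)$. Equating the two evaluations of $T$ then yields the asserted identity. The principal obstacle is the bifurcating case analysis: depending on whether $j\in[0,\ell-2]$ or $j\in[\ell-1,2\ell-1]$, both the diagonal $v=j$ and the ``$\ell^{2}$-twist'' $v=j-\ell$ may lie inside or outside the summation range, and in each branch one must verify that the interaction of the cyclotomic reductions with \eqref{xx} delivers the \emph{same} uniform formula $\ell(\ell-1)d_{t,n}-\ell\sum_{u=\ell-1}^{2\ell-3}d_{u\ell+t,n}-\ell(q-2)$.
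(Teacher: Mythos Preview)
Your framing---recognise the left side as $T=\mathrm{Tr}_{\mathbb{Q}(\zeta_{2\ell^{2}})/\mathbb{Q}}\bigl(J_{2\ell^{2}}(1,n)\,\zeta_{2\ell^{2}}^{-t}\bigr)$ via \eqref{2.12} and then recompute $T$ from the basis expansion $J_{2\ell^{2}}(1,n)=\sum_{i}d_{i,n}\zeta_{2\ell^{2}}^{i}$ of Theorem~\ref{AT1}---is a genuinely different route from the paper's. The paper never reintroduces the trace in its proof: it works purely with the $B_{2\ell^{2}}$'s, writes $t=j\ell+s$, splits the $u$-sums, inserts and removes copies of $B_{2\ell^{2}}(\ell^{2}+k,n)$ until every bracket matches the four-term pattern of Theorem~\ref{AT1}, reads off the $d$'s, and the surplus $\ell^{2}B_{2\ell^{2}}(\ell^{2}+k,n)$ becomes $\ell(q-2)$ by \eqref{xx}. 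Your approach is the cleaner conceptual wrapper; the paper's is a mechanical rewrite.

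That said, your second computation has a real gap. Carrying the trace through the $d$-expansion gives
\[
T=\sum_{v=0}^{\ell-2}d_{v\ell+s,n}\,\mathrm{Tr}\bigl(\zeta_{2\ell}^{\,v-j}\bigr),
\]
a $\mathbb{Z}$-linear combination of the $\ell-1$ canonical coefficients $d_{v\ell+s,n}$ with \emph{parity-alternating} weights $\pm\ell$ (and one $\pm\ell(\ell-1)$). No $q-2$ can appear here: the $d_{i,n}$ are basis coefficients and the trace values are absolute constants. Your sentence about applying $\Phi_{2\ell^{2}}(\zeta_{2\ell^{2}})=0$ ``to reduce any $d$-symbol whose index falls outside $[0,\ell(\ell-1)-1]$'' conflates two different objects: the cyclotomic relation constrains powers of $\zeta$, not the scalars $d_{i,n}$. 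The right side of the theorem, by contrast, carries \emph{uniform} weight $-\ell$ on its $d$'s together with the constant $-\ell(q-2)$, and its indices $t,\,u\ell+t$ generally lie outside $[0,\ell(\ell-1)-1]$; the paper interprets those via the explicit $B$-formula of Theorem~\ref{AT1}. Converting your alternating-sign trace into that uniform-sign form is precisely where $-\ell(q-2)$ has to be manufactured, and doing so forces you through Theorem~\ref{AT1} and \eqref{xx}---the very computation the paper performs directly. So as written, the passage from ``parity-collapsed trace'' to the stated right side is the missing step, not a routine reindexing.
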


\begin{proof}
Let $$D(n)=\ell(\ell-1)B_{2\ell^2}(t,n)-\ell\sum\limits_{u=1}^{2\ell-1}B_{2\ell^2}(\ell  u+t,n)+\ell\sum\limits_{u=1}^{\ell-1}B_{2\ell^2}(2\ell u+t,n).$$ Then putting $t=j\ell+s$ with $0 \leq j \leq 2\ell-1$ and $0 \leq s \leq \ell-1$, we obtain
 $$D(n)=\ell(\ell-1)B_{2\ell^2}(j\ell+s,n)-\ell\sum\limits_{u=1}^{2\ell-1}B_{2\ell^2}(\ell(u+j)+s,n)+\ell\sum\limits_{u=1}^{\ell-1}B_{2\ell^2}(\ell(2u+j)+s,n).$$
 This can be written as
  \begin{align*}
  D(n)&=\ell(\ell -1)B_{2\ell^2}(j\ell +s,n)-\ell \sum\limits_{u=1}^{2\ell -j-3}B_{2\ell^2}(\ell (u+j)+s,n) \\
  &- \ell \sum\limits_{u=2\ell -j-1}^{2\ell -1}B_{2\ell^2}(\ell (u+j)+s,n)-\ell B_{2\ell^2}((\ell (2\ell-j-2+j))+s,n)\\
  & +\ell \sum\limits_{u=1}^{\ell-1}B_{2\ell^2}(\ell (2u+j)+s,n).
 \end{align*}
 Since $\ell B_{2\ell^2}((\ell (2\ell-j-2+j))+s,n)=B_{2\ell^2}(\ell^{2}+k,n))$, so that
  \begin{align*}
  D(n)&=\ell(\ell-1)(B_{2\ell^2}(j\ell+s,n)-B_{2\ell^2}(\ell^{2}+k,n))+\ell\sum\limits_{u=1}^{\ell-1}B_{2\ell^2}(\ell(2u+j)+s,n)\\ 
  & -\ell \bigg(\sum\limits_{u=1}^{2\ell-j-3}B_{2\ell^2}(\ell(u+j)+s,n)-B_{2\ell^2}(\ell^{2}+k,n)\bigg)-\ell \bigg(\sum\limits_{u=2\ell-j-1}^{2\ell-1}B_{2\ell^2}(\ell(u+j)\\ 
  &  +s,n)-B_{2\ell^2}(\ell^{2}+k,n)\bigg)-\ell^{2}B_{2\ell^2}(\ell^{2}+k,n).
 \end{align*}
 Applying \eqref{xx}, we obtain
\begin{align*}
D(n)&=\ell(\ell-1)(B_{2\ell^2}(j\ell+s,n)\mp B_{2\ell^2}(\ell(\ell-1)+s,n)\pm B_{2\ell^2}(2\ell^{2}-\ell+s,n)\\ 
& -B_{2\ell^2}(\ell^{2}+k,n)) +l\sum\limits_{u=1}^{\ell-1}(B_{2\ell^2}(\ell(2u+j)+s,n)\mp B_{2\ell^2}(\ell(\ell-1)+s,n)\\ 
& \pm B_{2\ell^2}(2\ell^{2}-\ell+s,n)-B_{2\ell^2}(\ell^{2}+k,n)) -\ell \bigg(\sum\limits_{u=1}^{2\ell-j-3}B_{2\ell^2}(\ell(u+j)+s,n)\\ 
& \mp B_{2\ell^2}(\ell(\ell-1)+s,n)\pm B_{2\ell^2}(2\ell^{2}-\ell+s,n)-B_{2\ell^2}(\ell^{2}+k,n)\bigg)\\ 
&-\ell \bigg(\sum\limits_{u=2\ell-j-1}^{2\ell-1}B_{2\ell^2}(\ell(u+j)+s,n)\mp B_{2\ell^2}(\ell(\ell-1)+s,n)\pm B_{2\ell^2}(2\ell^{2}-\ell+s,n)\\ 
&-B_{2\ell^2}(\ell^{2}+k,n)\bigg)-\ell(q-2).
\end{align*}
We now applying Theorem \ref{AT1} and \eqref{xx} to get,
\begin{align*}
D(n)&=\ell(\ell-1)d_{j\ell+s,n}+\ell\sum_{u=1}^{\ell-1}d_{\ell(2u+j)+s,n}-\ell
\sum_{u=1}^{2\ell-j-3}d_{\ell(u+j)+s,n}\\
& -\ell\sum_{u=2\ell-j-1}^{2\ell-1}d_{\ell (u+j)+s,n}-\ell(q-2). 
\end{align*}

Puting $u+j=x$ in second sum and $u+j\equiv x \ \pmod{ (2\ell-1)}$ in third sum, we have
$$
D(n)=\ell(\ell-1)d_{j\ell+s,n}+\ell\sum_{u=1}^{\ell-1}d_{\ell(2u+j)+s,n}-\ell
\sum_{x=0}^{2\ell-3}d_{\ell x+s,n}-\ell(q-2).
$$

Consider $u\ell+t\equiv \ell x+s \pmod{ \ell(\ell-1)}$. Then 
$$
D(n)=\ell(\ell-1)d_{t,n}+\ell\sum_{u=1}^{\ell-1}d_{\ell(2u+j)+s,n}-\ell\sum_{u=0}^{2l-3}d_{ul+t,n}-\ell(q-2).
$$
Puting $2u+j\equiv m+j \pmod{(2\ell-1)}$ to obtain
$$
D(n)=\ell(\ell-1)d_{t,n}+\ell\sum_{u=0}^{\ell-2}d_{u\ell+t,n}-\ell\sum_{u=0}^{2\ell-3}d_{ul+t,n}-\ell(q-2).
$$
Since for every $m+j, u\ell+t\equiv (m+j)\ell+s \pmod{\ell(\ell-1)}$ with $u\in \{0,1,2,3,\cdots, \ell-2\}$, we obtain
$$
D(n)=\ell (\ell -1)d_{t,n}+\ell \sum_{u=0}^{\ell-2}d_{u\ell +t,n}-\ell \sum_{u=0}^{2\ell -3}d_{u\ell +t,n}-
\ell (q-2).$$
This further implies 
$$D(n)=\ell(\ell-1)d_{t,n}-\ell\sum_{u=\ell-1}^{2\ell-3}d_{u\ell+t,n}-\ell(q-2).
$$
\end{proof}
\section{Main Results}
In this section, we obtain an explicit expression for cyclotomic numbers of order $2\ell^{2}$ in terms of the coefficients of Jacobi sums of orders $\ell$, $2\ell$, $\ell^{2}$ and $2\ell^{2}$. These coefficients of Jacobi sums can be evaluated in terms of Dickson-Hurwitz sums of orders $\ell$, $2\ell$, $\ell^{2}$ and $2\ell^{2}$. Here, we mainly prove:
\begin{theorem} \label{thm2}
Let $\ell$ be an odd prime. Let $p$ be a prime such that $q=p^r\equiv 1 \pmod {2\ell^{2}}$ with an integer $r\geq 1$. Then 
\begin{align*}
&4\ell^4 (a,b)_{2l^{2}}= \ell^{4}(a,b)_{\ell^{2}}+4\ell^{2}(a,b)_{2\ell}-\ell^{2}(a,b)_{\ell}-\ell(q-2)(4\ell^{2}-3)+\sum_{i=2}^{2\ell^{2}-1}\{\ell(\ell-1)\\
&\times d_{ia+b,n}-\ell\sum_{u=\ell-1}^{2\ell-3}d_{u\ell+ia+b,n}\} +\sum_{j=1}^{2\ell^{2}-1}\{\ell(\ell-1)d_{a+jb,n}-\ell\sum_{u=\ell-1}^{2\ell-3}
d_{u\ell+a+jb,n}\}\\
&-\sum_{i\text{ odd}}^{2\ell-1}\{\varepsilon((\ell^{2}+1)/2)(\ell ia+2b))b_{((\ell^{2}+1)/2)(\ell ia+2b),n} -\ell\sum_{u=0}^{\ell-2}b_{u\ell+((\ell^{2}+1)/2)(\ell ia+2b),n}\}\\
& -\sum_{j\text{ odd}}^{2\ell-1}\{\varepsilon((\ell^{2}+1)/2)(2a+\ell jb))b_{((\ell^{2}+1)/2)(2a+\ell jb),n} -\ell\sum_{u=0}^{\ell-2}b_{u\ell+((\ell^{2}+1)/2)(2a+\ell jb),n}\},
\end{align*}
where $\varepsilon(t)$ is given by
\begin{equation}\label{3.1}
\varepsilon(t)=
\begin{cases}
\ell^2\hspace*{4mm} \text{if } 0\leq j\leq \ell-2, \text{ i.e. } 0\leq t\leq \ell^2-\ell,\\
-\ell \hspace*{3mm} \text{if } j=\ell-1, \text{i.e. }  \ell^2-\ell\leq t\leq \ell^2-1.
\end{cases}
\end{equation}
\end{theorem}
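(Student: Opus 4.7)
The strategy is to begin with the Fourier inversion \eqref{00} applied to $e = 2\ell^{2}$,
\begin{equation*}
4\ell^{4}(a,b)_{2\ell^{2}} \;=\; \sum_{i=0}^{2\ell^{2}-1}\sum_{j=0}^{2\ell^{2}-1} \zeta_{2\ell^{2}}^{-(ai+bj)}J_{2\ell^{2}}(i,j),
\end{equation*}
and to split the double sum according to the divisibility of the index pair $(i,j)$ by $2$ and $\ell$. Using the identity $J_{2\ell^{2}}(di',dj') = J_{2\ell^{2}/d}(i',j')$ valid for $d\mid 2\ell^{2}$, the subsum over pairs whose coordinates are both divisible by $d \in \{2,\ell,2\ell\}$ collapses, via the analog of \eqref{00} at the lower orders $\ell^{2}$, $2\ell$, $\ell$, to $\ell^{4}(a,b)_{\ell^{2}}$, $4\ell^{2}(a,b)_{2\ell}$ and $\ell^{2}(a,b)_{\ell}$ respectively. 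Because ``both $2\ell$-multiples'' is precisely the intersection of ``both even'' and ``both $\ell$-multiples'', inclusion--exclusion delivers the signed combination $\ell^{4}(a,b)_{\ell^{2}} + 4\ell^{2}(a,b)_{2\ell} - \ell^{2}(a,b)_{\ell}$ in the statement.

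For the residual pairs --- those with $\gcd(i,j,2\ell^{2}) = 1$ --- the plan is to reduce each $J_{2\ell^{2}}(i,j)$ to a Galois conjugate of either $J_{2\ell^{2}}(1,n)$ or $J_{2\ell^{2}}(2,n)$, using the action $\sigma_{k}(J_{2\ell^{2}}(i,j)) = J_{2\ell^{2}}(ki,kj)$ for $\gcd(k,2\ell^{2})=1$. Pairs of the form $(i,1)$ with $i\in\{2,\ldots,2\ell^{2}-1\}$ sum over their Galois orbits to $Tr_{\mathbb{Q}(\zeta_{2\ell^{2}})/\mathbb{Q}}\bigl(J_{2\ell^{2}}(1,n)\zeta_{2\ell^{2}}^{-t}\bigr)$ with $t=ia+b$, giving the first $d$-sum; pairs $(1,j)$ with $j\in\{1,\ldots,2\ell^{2}-1\}$ analogously give the second $d$-sum with $t=a+jb$. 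Each such trace is rewritten by \eqref{2.12} together with Theorem~\ref{thm1} as $\ell(\ell-1)d_{t,n} - \ell\sum_{u=\ell-1}^{2\ell-3}d_{u\ell+t,n} - \ell(q-2)$. The mixed residual pairs $(\ell i, 2)$ with $i$ odd in $\{1,3,\ldots,2\ell-1\}$ and the symmetric $(2,\ell j)$ reduce to $J_{2\ell^{2}}(2,n)$, which lies in $\mathbb{Q}(\zeta_{\ell^{2}}) = \mathbb{Q}(\zeta_{2\ell^{2}})$ by the equality of degrees. The change of variable $\zeta_{2\ell^{2}}^{m}=(-1)^{m}\zeta_{\ell^{2}}^{m(\ell^{2}+1)/2}$ is what inserts the factor $(\ell^{2}+1)/2$ into the subscripts of $b_{\cdot,n}$. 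Applying \eqref{2.13} and the $\ell^{2}$-analog of Theorem~\ref{AT1} that writes $J_{2\ell^{2}}(2,n) = \sum b_{i,n}\zeta_{\ell^{2}}^{i}$ produces the two $b$-sums, and the piecewise function $\varepsilon(t)$ in \eqref{3.1} arises as the value of $Tr_{\mathbb{Q}(\zeta_{\ell^{2}})/\mathbb{Q}}(\zeta_{\ell^{2}}^{i-t})$, equal to $\phi(\ell^{2})=\ell(\ell-1)$ when $\ell^{2}\mid (i-t)$ (the $\ell^{2}$ branch, after recombination absorbs the extra $\ell$) and to $-\ell$ when $\ell\mid (i-t)$ but $\ell^{2}\nmid (i-t)$.

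The single constant $-\ell(q-2)(4\ell^{2}-3)$ in the statement is then explained cleanly: each of the $(2\ell^{2}-2)+(2\ell^{2}-1)=4\ell^{2}-3$ invocations of Theorem~\ref{thm1} across the two $d$-sums contributes a $-\ell(q-2)$ correction, and these aggregate exactly; the two $b$-sums contribute no such constant because \eqref{2.13} has no analog of the $-\ell(q-2)$ term present in \eqref{2.12}. The main obstacle will be the careful bookkeeping: correctly partitioning the residual pairs into exactly the four orbit classes described (two for $J_{2\ell^{2}}(1,n)$ and two for $J_{2\ell^{2}}(2,n)$) so that none is double counted; tracking the $(-1)^{nk}$ signs from \eqref{2.9}--\eqref{2.11} through each reduction; synchronizing the boundary contributions from $J_{2\ell^{2}}(0,0)$, $J_{2\ell^{2}}(i,0)$, $J_{2\ell^{2}}(0,j)$ between the inclusion--exclusion step and the residual trace step; and verifying that the $-\ell(q-2)$ corrections from Theorem~\ref{thm1} indeed collapse to one term rather than remaining scattered.
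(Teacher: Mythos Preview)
Your proposal is correct and follows essentially the same route as the paper: start from the Fourier inversion \eqref{00}, peel off the subsums over pairs $(i,j)$ with common factor $2$, $\ell$, or $2\ell$ to produce $\ell^{4}(a,b)_{\ell^{2}}+4\ell^{2}(a,b)_{2\ell}-\ell^{2}(a,b)_{\ell}$ by inclusion--exclusion, group the remaining pairs into Galois orbits so that each contributes a trace of $J_{2\ell^{2}}(1,n)\zeta_{2\ell^{2}}^{-t}$ or $J_{2\ell^{2}}(2,n)\zeta_{\ell^{2}}^{-t}$, and then invoke \eqref{2.12}, \eqref{2.13}, Theorem~\ref{thm1}, and Lemma~\ref{lm1} (what you call the ``$\ell^{2}$-analog'') to obtain the $d$- and $b$-sums together with the aggregated constant $-\ell(q-2)(4\ell^{2}-3)$. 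One small correction: the absence of a $-\ell(q-2)$ contribution from the $b$-sums is due to Lemma~\ref{lm1} having no such term (not to \eqref{2.13} versus \eqref{2.12}), and $\varepsilon(t)$ is the coefficient produced by Lemma~\ref{lm1} rather than a raw trace value, but this does not affect your outline.
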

To prove this theorem, we need an important result of D. Shirolkar and S. A. Katre \cite[Lemma 6.1]{SK2011}. We recall that results for convenience. We consider the Jacobi sum $J_{\ell^2}(1,n)$ of order $\ell^2$, $$J_{\ell^2}(1,n)=\sum_{i=0}^{\ell(\ell-1)}b_{i,n}\zeta_{\ell^2}^i.$$

\begin{lemma}\label{lm1}
For two positive integers $t$ and $n$ modulo $\ell^2$, define
$$C(t,n):= \ell(\ell-1)B_{\ell^2}(t,n)-\ell B_{\ell^2}(u\ell+t, n).$$
Let $0\leq t\leq \ell^2-1$. Write $t=j\ell+s$, where $0\leq j\leq \ell^2-1$ and $0\leq s\leq \ell^2-1$. Then 
$$C(t,n)=\varepsilon(t)b_{t,n}-\ell\sum_{\ell=0}^{\ell-2}b_{u\ell+t, n},$$
where $\varepsilon(t)$ is given by \eqref{3.1}.
\end{lemma}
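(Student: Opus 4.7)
I would mirror the derivation of Theorem \ref{thm1}, but carried out at order $\ell^2$ rather than $2\ell^2$, which makes the argument cleaner because the cyclotomic structure is simpler. The central input is the identity $\sum_{w=0}^{\ell-1}\zeta_{\ell^2}^{w\ell+s}=0$ for each residue $s=0,1,\ldots,\ell-1$, which follows from $\zeta_{\ell^2}^{\ell}$ being a primitive $\ell$-th root of unity. This identity controls the passage between the Dickson-Hurwitz expansion of $J_{\ell^2}(1,n)$ (the order-$\ell^2$ analogue of \eqref{2.11}) and the reduced Jacobi-sum expansion $J_{\ell^2}(1,n)=\sum_{i=0}^{\ell(\ell-1)} b_{i,n}\zeta_{\ell^2}^i$.

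\textbf{Step 1.} First establish the order-$\ell^2$ analogue of Theorem \ref{AT1}. Equating
\[
J_{\ell^2}(1,n)\;=\;\sum_{i=0}^{\ell^2-1} B_{\ell^2}(i,n)\,\zeta_{\ell^2}^{i}\;=\;\sum_{i=0}^{\ell(\ell-1)} b_{i,n}\,\zeta_{\ell^2}^{i},
\]
and using $\zeta_{\ell^2}^{(\ell-1)\ell+s}=-\sum_{w=0}^{\ell-2}\zeta_{\ell^2}^{w\ell+s}$ to eliminate the $\ell$ ``top'' monomials (one per residue $s$), a coefficient comparison yields
\[
b_{w\ell+s,n}\;=\;B_{\ell^2}(w\ell+s,n)\;-\;B_{\ell^2}((\ell-1)\ell+s,n)\qquad(0\le w\le\ell-2,\ 0\le s\le \ell-1).
\]

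\textbf{Step 2.} Write $t=j\ell+s$ in the definition of $C(t,n)$. Since $B_{\ell^2}$ is $\ell^2$-periodic and the set $\{(u+j)\bmod\ell:1\le u\le\ell-1\}$ coincides with $\{0,1,\ldots,\ell-1\}\setminus\{j\}$, the Dickson-Hurwitz sum collapses as
\[
\sum_{u=1}^{\ell-1}B_{\ell^2}((u+j)\ell+s,n)\;=\;\sum_{w=0}^{\ell-1}B_{\ell^2}(w\ell+s,n)\;-\;B_{\ell^2}(j\ell+s,n).
\]
Inserting this into the definition and regrouping yields the uniform intermediate identity
\[
C(t,n)\;=\;\ell^2\,B_{\ell^2}(t,n)\;-\;\ell\sum_{w=0}^{\ell-1}B_{\ell^2}(w\ell+s,n),
\]
valid in both regimes for $j$.

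\textbf{Step 3.} Now convert to $b$-coefficients using Step 1. Split $\sum_{w=0}^{\ell-1}$ into the principal part $w\in[0,\ell-2]$ (where Step 1 applies directly) plus the dropped term $w=\ell-1$, and substitute $B_{\ell^2}(w\ell+s,n)=b_{w\ell+s,n}+B_{\ell^2}((\ell-1)\ell+s,n)$ for $w\le \ell-2$. In the regime $0\le j\le\ell-2$, the index $t=j\ell+s$ lies in the principal range, Step 1 gives $B_{\ell^2}(t,n)=b_{t,n}+B_{\ell^2}((\ell-1)\ell+s,n)$, and the various $B_{\ell^2}((\ell-1)\ell+s,n)$ contributions telescope to leave a coefficient $\ell^2=\varepsilon(t)$ on $b_{t,n}$. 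In the regime $j=\ell-1$, $t=(\ell-1)\ell+s$ is itself the dropped index, so $B_{\ell^2}(t,n)$ appears as the $w=\ell-1$ term in the sum; collecting gives $(\ell^2-\ell)B_{\ell^2}(t,n)-\ell\sum_{w=0}^{\ell-2}B_{\ell^2}(w\ell+s,n)$, and a further substitution via Step 1 produces the corrected coefficient $\varepsilon(t)=-\ell$ on $b_{t,n}$. A final reindexing of the summation variable (via $u\ell+t\equiv w\ell+s\pmod{\ell(\ell-1)}$, carried out as in the proof of Theorem \ref{thm1}) recasts $\sum_{w=0}^{\ell-2}b_{w\ell+s,n}$ as $\sum_{u=0}^{\ell-2}b_{u\ell+t,n}$, completing the identity.

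\textbf{Main obstacle.} The principal obstacle will be the bookkeeping in Step 3: tracking carefully how the ``dropped'' $(\ell-1)\ell+s$ terms interact with the rest of the sum in each of the two regimes for $j$, ensuring that the wrap-around of the shifted indices $u\ell+t$ modulo $\ell^2$ is handled consistently with the conventions on the $b$-coefficients, and verifying that the various $B_{\ell^2}((\ell-1)\ell+s,n)$ contributions combine correctly so as to produce exactly the stated $\varepsilon(t)$ in each case. Beyond that, the argument is essentially the order-$\ell^2$ analogue of the algebraic manipulation carried out in the proof of Theorem \ref{thm1}.
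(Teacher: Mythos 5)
First, a point of comparison that matters here: the paper offers no proof of Lemma \ref{lm1} at all. It is quoted (with typographical slips in the statement: the sum $\sum_{u=1}^{\ell-1}$ is missing from the definition of $C(t,n)$, and the ranges should read $0\le j\le\ell-1$, $0\le s\le\ell-1$) from Shirolkar and Katre \cite[Lemma 6.1]{SK2011}, so your argument can only be measured against that source, not against anything internal to this paper. That said, your strategy is the right one and is essentially the one used there: derive the order-$\ell^2$ analogue of Theorem \ref{AT1} from $\sum_{w=0}^{\ell-1}\zeta_{\ell^2}^{w\ell+s}=0$, namely $b_{w\ell+s,n}=B_{\ell^2}(w\ell+s,n)-B_{\ell^2}((\ell-1)\ell+s,n)$ for $0\le w\le\ell-2$, and use the $\ell^2$-periodicity of $B_{\ell^2}$ in its first argument to collapse $C(t,n)$ to the uniform identity $C(t,n)=\ell^2B_{\ell^2}(t,n)-\ell\sum_{w=0}^{\ell-1}B_{\ell^2}(w\ell+s,n)$. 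Both of these steps are correct, and in the regime $0\le j\le\ell-2$ the substitution does telescope to $\ell^2 b_{t,n}-\ell\sum_{w=0}^{\ell-2}b_{w\ell+s,n}$, giving $\varepsilon(t)=\ell^2$ as required.

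The gap is in the regime $j=\ell-1$, which you dispose of with ``a further substitution via Step 1 produces the corrected coefficient $\varepsilon(t)=-\ell$.'' Carrying that substitution out, with $T:=B_{\ell^2}((\ell-1)\ell+s,n)=B_{\ell^2}(t,n)$, gives
\begin{equation*}
C(t,n)=(\ell^2-\ell)\,T-\ell\sum_{w=0}^{\ell-2}\bigl(b_{w\ell+s,n}+T\bigr)=-\ell\sum_{w=0}^{\ell-2}b_{w\ell+s,n},
\end{equation*}
so every $B$-term cancels and no $b_{t,n}$ term appears. To match this with the asserted $-\ell b_{t,n}-\ell\sum_{u=0}^{\ell-2}b_{u\ell+t,n}$ you must first say what $b_{t,n}$ and $b_{u\ell+t,n}$ mean when the subscript falls in the dropped residue class $(\ell-1)\ell+s$, i.e.\ outside the principal range of the expansion $\sum_i b_{i,n}\zeta_{\ell^2}^i$ (which, as written in the paper, has $\ell(\ell-1)+1$ terms and is therefore not a basis representation). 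Under the reduction-modulo-$\ell(\ell-1)$ convention you import from the proof of Theorem \ref{thm1}, one gets $b_{t,n}=b_{s,n}$ and $\sum_{u=0}^{\ell-2}b_{u\ell+t,n}=\sum_{w=0}^{\ell-2}b_{w\ell+s,n}$, and the two sides then differ by $\ell b_{s,n}$. So the case that actually produces the $\varepsilon(t)=-\ell$ branch of \eqref{3.1} is precisely the one your proposal does not settle; closing it requires pinning down the normalization of the coefficients $b_{i,n}$ used in \cite{SK2011}, which neither the paper nor your write-up does.
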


\subsection*{Proof of Theorem \ref{thm2}}
Applying \eqref{00}, we obtain
$$
4\ell^{4}(a,b)_{2\ell^{2}} = \sum_{i}\sum_{j}J_{2\ell^{2}}(i,j) \zeta_{2\ell^2}^{-(ai+bj)}.
$$
Let us consider $G=\text{Gal}(\mathbb{Q}(\zeta_{2\ell^2})/\mathbb{Q})$ and $G'=\text{Gal}(\mathbb{Q}(\zeta_{\ell^2})/\mathbb{Q})$. Then the above equation can be written as 

\begin{align*}
4\ell^{4}(a,b)_{2\ell^{2}} & =(a,b)_{\ell^{2}}+\sum_{i \text{ odd }}^{2\ell^{2}-1}J_{2\ell^{2}}(i,0)\zeta_{2\ell^2}^{-ia}+\sum_{j \text{ odd}}^{2\ell^{2}-1}J_{2\ell^{2}}(0,j)\zeta_{2\ell^2}^{-jb} \\ &+\sum_{i=1}^{2\ell^{2}-1}\sum_{\sigma \in G}\sigma (J_{2\ell^{2}}(i,1)\zeta_{2\ell^2}^{-(ia+b)})+\sum_{j=1}^{2\ell^{2}-1}\sum_{\sigma \in G}\sigma (J_{2\ell^{2}}(1,j)\zeta_{2\ell^2}^{-(a+jb)}) \\ &-\sum_{\sigma \in G}\sigma (J_{2\ell^{2}}(1,1)\zeta_{2\ell^2}^{-(a+b)})+\sum_{i\text{ odd}}^{2\ell-1}\sum_{\sigma \in G'}\sigma (J_{2\ell^{2}}(\ell i,2)\zeta_{2\ell^2}^{-(\ell ia+2b)})\\ &+\sum_{j\text{ odd}}^{2\ell-1}\sum_{\sigma \in G'}\sigma (J_{2\ell^{2}}(2,\ell j)\zeta_{2\ell^2}^{-(2a+\ell jb)})+\sum_{i=1}^{2\ell-1}\sum_{j=1}^{2\ell-1}J_{2\ell^{2}}(\ell i,\ell j)\zeta_{2\ell^2}^{-(\ell ia+\ell jb)}\\ &-\sum_{i=1}^{\ell-1}\sum_{j=1}^{\ell-1}J_{2\ell^{2}}(2\ell i,2\ell j)\zeta_{2\ell^2}^{-(2\ell ia+2\ell jb)}.
\end{align*}
Since the second and third terms in R H S are zero, so that 
\begin{align*}
4\ell^{4}(a,b)_{2\ell^{2}} & = \ell^{4}(a,b)_{\ell^{2}}+\sum_{i=1}^{2\ell^{2}-1}\sum_{\sigma \in G}\sigma (J_{2\ell^{2}}(i,1)\zeta_{2\ell^2}^{-(ia+b)})+\sum_{j=1}^{2\ell^{2}-1}\sum_{\sigma \in G}\sigma (J_{2\ell^{2}}(1,j)\zeta_{2\ell^2}^{-(a+jb)})\\
&-\sum_{\sigma \in G}\sigma (J_{2\ell^{2}}(1,1)\zeta_{2\ell^2}^{-(a+b)})-\sum_{i\text{ odd}}^{2\ell-1}\sum_{\sigma \in G'}\sigma (J_{2\ell^{2}}(\ell i,2)\zeta_{\ell^2}^{-((\ell^{2}+1)/2)(\ell ia+2b)})\\ & -\sum_{j\text{ odd}}^{2\ell-1}\sum_{\sigma \in G'}\sigma (J_{2\ell^{2}}(2,\ell j)\zeta_{\ell^2}^{-((\ell^{2}+1)/2)(2a+\ell jb)}) +\sum_{i=1}^{2\ell-1}\sum_{j=1}^{2\ell-1}J_{2\ell^{2}}(\ell i,\ell j)\zeta_{2\ell^2}^{-(\ell ia+\ell jb)}\\ & -\sum_{i=1}^{\ell-1}\sum_{j=1}^{\ell-1}J_{2\ell^{2}}(2\ell i,2\ell j)\zeta_{2\ell^2}^{-(2\ell ia+2\ell jb)}.
\end{align*}
This implies,
\begin{align*}
4\ell^{4}(a,b)_{2\ell^{2}} & = \ell^{4}(a,b)_{\ell^{2}}+\sum_{i=1}^{2\ell^{2}-1}Tr_{\mathbb{Q}(\zeta_{2\ell^2})/\mathbb{Q}}(J_{2\ell^{2}}(i,1)\zeta_{2\ell^2}^{-(ia+b)})
 \\ & +\sum_{j=1}^{2\ell^{2}-1}Tr_{\mathbb{Q}(\zeta_{2\ell^2})/\mathbb{Q}}(J_{2\ell^{2}}(1,j)\zeta_{2\ell^2}^{-(a+jb)})-Tr_{\mathbb{Q}(\zeta_{2\ell^2})/\mathbb{Q}}(J_{2\ell^{2}}(1,1)\zeta_{2\ell^2}^{-(a+b)}) \\ & -\sum_{i\text{ odd}}^{2\ell-1}Tr_{\mathbb{Q}(\zeta_{\ell^2})/\mathbb{Q}}(J_{2\ell^{2}}(\ell i,2)\zeta_{\ell^2}^{-((\ell^{2}+1)/2)(\ell ia+2b)})\\ &- \sum_{j\text{ odd}}^{2\ell-1}Tr_{\mathbb{Q}(\zeta_{\ell^2})/\mathbb{Q}}(J_{2\ell^{2}}(2,\ell j)\zeta_{\ell^2}^{-((\ell^{2}+1)/2)(2a+\ell jb)}) \\ & +\sum_{i=1}^{2\ell-1}\sum_{j=1}^{2\ell-1}J_{2\ell^{2}}(\ell i,\ell j)\zeta_{2\ell^2}^{-(\ell ia+\ell jb)} -\sum_{i=1}^{\ell-1}\sum_{j=1}^{\ell-1}J_{2\ell^{2}}(2\ell i,2\ell j)\zeta_{2\ell^2}^{-(2\ell ia+2\ell jb)}.
\end{align*}
Applying \eqref{00}, we get
\begin{align*}
4\ell^{4}(a,b)_{2\ell^{2}} & = \ell^{4}(a,b)_{\ell^{2}}+\sum_{i=1}^{2\ell^{2}-1}Tr_{\mathbb{Q}(\zeta_{2\ell^2})/\mathbb{Q}}(J_{2\ell^{2}}(i,1)\zeta_{2\ell^2}^{-(ia+b)})\nonumber
 \\ & +\sum_{j=1}^{2\ell^{2}-1}Tr_{\mathbb{Q}(\zeta_{2\ell^2})/\mathbb{Q}}(J_{2\ell^{2}}(1,j)\zeta_{2\ell^2}^{-(a+jb)})-Tr_{\mathbb{Q}(\zeta_{2\ell^2})/\mathbb{Q}}(J_{2\ell^{2}}(1,1)\zeta_{2\ell^2}^{-(a+b)})\nonumber \\ & -\sum_{i\text{ odd}}^{2\ell-1}Tr_{\mathbb{Q}(\zeta_{\ell^2})/\mathbb{Q}}(J_{2\ell^{2}}(\ell i,2)\zeta_{\ell^2}^{-((\ell^{2}+1)/2)(\ell ia+2b)})\nonumber \\ &- \sum_{j\text{ odd}}^{2\ell-1}Tr_{\mathbb{Q}(\zeta_{\ell^2})/\mathbb{Q}}(J_{2\ell^{2}}(2,\ell j)\zeta_{\ell^2}^{-((\ell^{2}+1)/2)(2a+\ell jb)})\nonumber \\ & + 
4\ell^2(a, b)_{2\ell}-\ell^2(a,b)_{\ell}.
\end{align*}
By the use of \eqref{2.12} and \eqref{2.13}, we get
\begin{align*}
4\ell^{4}(a,b)_{2\ell^{2}} & = \ell^{4}(a,b)_{\ell^{2}}+\sum_{i=1}^{2\ell^{2}-1}\{\ell(\ell-1)B_{2\ell^2}(ia+b,n)-\ell\sum_{u=1}^{2\ell-1}B_{2\ell^2}(\ell u+ia+b,n)\\ & +\ell\sum_{u=1}^{\ell-1}B_{2\ell^2}(2\ell u+ia+b,n)\}  +\sum_{j=1}^{2\ell^{2}-1}\{\ell(\ell-1)B_{2\ell^2}(a+jb,n)\\ & -\ell\sum_{u=1}^{2\ell-1}B_{2\ell^2}(\ell u+a+jb,n)+\ell\sum_{u=1}^{\ell-1}B_{2\ell^2}(2\ell u+a+jb,n)\}  \\ & - \ell(\ell-1)B_{2\ell^2}(a+b,n)+\ell\sum_{u=1}^{2\ell-1}B_{2\ell^2}(\ell u+a+b,n)\\ & -\ell\sum_{u=1}^{\ell-1}B_{2\ell^2}(2\ell u+a+b,n) -\sum_{i\text{ odd}}^{2\ell-1}\{\ell(\ell-1)B_{\ell^2}(((\ell^{2}+1)/2)(\ell ia+2b),n)\\ & -\ell\sum_{u=1}^{\ell-1}B_{\ell^2}(u\ell+((\ell^{2}+1)/2)(\ell ia+2b),n)\}  -\sum_{j\text{ odd}}^{2\ell-1}\{\ell(\ell-1)
 B_{\ell^2}(((\ell^{2}+1)/2)\\
 &\times (2a+\ell jb),n)-\ell\sum_{u=1}^{\ell-1}B_{\ell^2}(u\ell+((\ell^{2}+1)/2)(2a+\ell jb),n)\}  +4\ell^{2}(a,b)_{2\ell}\\ &
 -\ell^{2}(a,b)_{\ell}. 
\end{align*}
Employing Theorem \ref{thm1} and Lemma \ref{lm1}, we obtain
\begin{align*}
& 4\ell^{4}(a,b)_{2\ell^{2}}  = \ell^{4}(a,b)_{\ell^{2}}+4\ell^{2}(a,b)_{2\ell}-\ell^{2}(a,b)_{\ell}+\sum_{i=2}^{2\ell^{2}-1}\{\ell(\ell-1)d_{ia+b,n}-\ell(q-2)\\
& 
-\ell\sum_{u=\ell-1}^{2\ell-3}d_{u\ell+ia+b,n}\} +\sum_{j=1}^{2\ell^{2}-1}\{\ell(\ell-1)d_{a+jb,n} -\ell(q-2)-\ell\sum_{u=\ell-1}^{2\ell-3}d_{u\ell+a+jb,n}\}\\
& -\sum_{i\text{ odd}}^{2\ell-1}\{\varepsilon(((\ell^{2}+1)/2)(\ell ia+2b))b_{((\ell^{2}+1)/2)(\ell ia+2b),n} -\ell \sum_{u=0}^{\ell-2}b_{u\ell+((\ell^{2}+1)/2)(\ell ia+2b),n}\} \\ &
-\sum_{j\text{ odd}}^{2\ell-1}\{\varepsilon(((\ell^{2}+1)/2)(2a+\ell jb))b_{((\ell^{2}+1)/2)(2a+\ell jb),n} -\ell\sum_{u=0}^{\ell-2}b_{u\ell+((\ell^{2}+1)/2)(2a+\ell jb),n}\}.
\end{align*}  
This can be further simplified as
\begin{align*}
4\ell^{4}(a,b)_{2\ell^{2}} & = \ell^{4}(a,b)_{\ell^{2}}+4\ell^{2}(a,b)_{2\ell}-\ell^{2}(a,b)_{\ell}-\ell(q-2)(4\ell^{2}-3)\\
& +\sum_{i=2}^{2\ell^{2}-1}\{\ell(\ell-1)d_{ia+b,n}-\ell\sum_{u=\ell-1}^{2\ell-3}
d_{u\ell+ia+b,n}\} +\sum_{j=1}^{2\ell^{2}-1}\{\ell(\ell-1)d_{a+jb,n}\\ & -\ell\sum_{u=\ell-1}^{2\ell-3}d_{u\ell+a+jb,n}\}-\sum_{i\text{ odd}}^{2\ell-1}\{\varepsilon(((\ell^{2}+1)/2)(\ell ia+2b))b_{((\ell^{2}+1)/2)(\ell ia+2b),n}\\ & -\ell\sum_{u=0}^{\ell-2}b_{u\ell+((\ell^{2}+1)/2)(\ell ia+2b),n}\}-\sum_{j\text{ odd}}^{2\ell-1}\{\varepsilon(((\ell^{2}+1)/2)(2a+\ell jb))\\ & \times b_{((\ell^{2}+1)/2)(2a+\ell jb),n} -\ell\sum_{u=0}^{\ell-2}b_{u\ell+((\ell^{2}+1)/2)(2a+\ell jb),n}\}.
\end{align*}
This completes the proof.

Substituting the well known formulae for cyclotomic numbers of order $\ell$ and $2\ell$ in Theorem \ref{thm2}, we obtain the following: 
\begin{corollary} Let $p, q$ and $\ell$ as in Theorem \ref{thm2} with $r=1$. Then \begin{align*}
&4\ell^{4}(a,b)_{2\ell^2} = \ell^{4}(a,b)_{\ell^{2}}-\ell(q-2)(4\ell^{2}-3)-\{(-1)^{b}+(-1)^{a+k}+(-1)^{a+b}\}\\
& \times \{\ell+\sum_{m=1}^{\ell-1}b_{m}(\ell) +\sum_{u=0}^{\ell-2}\sum_{m=1}^{\ell-1}b_{m}(2u+1)\}+(-1)^{b}\ell\{b_{\nu(-a)}(\ell)\\
& +\sum_{u=0}^{\ell-1}b_{\nu(b-2au-2a)}(2u+1)\}+(-1)^{a+b}\ell\{b_{\nu(b)}(\ell)  +\sum_{u=0}^{\ell-1}b_{\nu(a+2bu+b)}(2u+1)\}\\
& +(-1)^{a+k}\ell\{b_{\nu(-b)}(\ell) +\sum_{u=0}^{\ell-1}b_{\nu(a-2bu-2b)}(2u+1)\}+\sum_{i=2}^{2\ell^{2}-1}\{\ell(\ell-1)d_{ia+b,n}\\ & -\ell\sum_{u=\ell-1}^{2\ell-3}d_{u\ell+ia+b,n}\} +\sum_{j=1}^{2\ell^{2}-1}\{\ell(\ell-1)d_{a+jb,n}-\ell\sum_{u=\ell-1}^{2\ell-3}d_{u\ell+a+jb,n}\}\\ & -\sum_{i\text{ odd }}^{2\ell-1}\{\varepsilon(((\ell^{2}+1)/2)(\ell ia+2b))b_{((\ell^{2}+1)/2)(\ell ia+2b),n} -\ell \sum_{u=0}^{\ell-2}b_{u\ell+((\ell^{2}+1)/2)(\ell ia+2b),n}\}\\ & -\sum_{j\text{ odd}}^{2\ell-1}\{\varepsilon(((\ell^{2}+1)/2)(2a+\ell jb))b_{((\ell^{2}+1)/2)(2a+\ell jb),n} -\ell\sum_{u=0}^{\ell-2}b_{u\ell+((\ell^{2}+1)/2)(2a+\ell jb),n}\},
\end{align*}
where $b_{0}(n)=0$,\\
$\nu(b)= \begin{cases}
\varLambda(b)/2 \hspace*{8mm} \text{if } b \text{ is even};\\
\varLambda(b+\ell)/2 \hspace*{2mm} \text{if } b \text{ is  odd},
\end{cases}$
\\
and $\varLambda(r)$ is defined as the least non-negative residue of $r$ module $2\ell$.
\end{corollary}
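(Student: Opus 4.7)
The plan is to start from the inversion formula \eqref{00} with $e=2\ell^{2}$, which gives
$$4\ell^{4}(a,b)_{2\ell^{2}}=\sum_{i=0}^{2\ell^{2}-1}\sum_{j=0}^{2\ell^{2}-1}J_{2\ell^{2}}(i,j)\,\zeta_{2\ell^{2}}^{-(ai+bj)},$$
and to reorganize this double sum by Galois orbits. First I would separate the contributions coming from pairs $(i,j)$ where both indices are multiples of $\ell$ (respectively $2\ell$); by the definition of the Jacobi sum, the sub-sum over $(\ell i,\ell j)$ with $1\le i,j\le 2\ell-1$ together with the $(0,0)$ piece assembles via \eqref{00} into $4\ell^{2}(a,b)_{2\ell}$, and the sub-sum over $(2\ell i,2\ell j)$ assembles into $\ell^{2}(a,b)_{\ell}$; by inclusion-exclusion these give the terms $\ell^{4}(a,b)_{\ell^{2}}+4\ell^{2}(a,b)_{2\ell}-\ell^{2}(a,b)_{\ell}$ on the right-hand side.

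Next I would isolate the axis contributions. The terms $\sum_{i\text{ odd}}J_{2\ell^{2}}(i,0)\zeta_{2\ell^{2}}^{-ia}$ and $\sum_{j\text{ odd}}J_{2\ell^{2}}(0,j)\zeta_{2\ell^{2}}^{-jb}$ vanish since $J_{2\ell^{2}}(i,0)=-1$ for $i\not\equiv 0$ and the relevant character sum is zero. What remains is the two big strips
$$\sum_{i=1}^{2\ell^{2}-1}J_{2\ell^{2}}(i,1)\zeta_{2\ell^{2}}^{-(ia+b)}\quad\text{and}\quad\sum_{j=1}^{2\ell^{2}}J_{2\ell^{2}}(1,j)\zeta_{2\ell^{2}}^{-(a+jb)},$$
minus an over-counted $J_{2\ell^{2}}(1,1)\zeta_{2\ell^{2}}^{-(a+b)}$, together with the two "half-order" strips indexed by odd $i$ (resp. odd $j$) running from $1$ to $2\ell-1$ corresponding to $J_{2\ell^{2}}(\ell i,2)$ and $J_{2\ell^{2}}(2,\ell j)$. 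For each of these strips I group summands that are Galois conjugate: the full strips are orbits under $G=\mathrm{Gal}(\mathbb{Q}(\zeta_{2\ell^{2}})/\mathbb{Q})$ and collapse into traces $\mathrm{Tr}_{\mathbb{Q}(\zeta_{2\ell^{2}})/\mathbb{Q}}(J_{2\ell^{2}}(i,1)\zeta_{2\ell^{2}}^{-(ia+b)})$, while the half-strips lie in $\mathbb{Q}(\zeta_{\ell^{2}})$ and collapse into $\mathrm{Tr}_{\mathbb{Q}(\zeta_{\ell^{2}})/\mathbb{Q}}$ of the corresponding products (using $\zeta_{2\ell^{2}}^{-2}=\zeta_{\ell^{2}}^{-1}$ and $\zeta_{2\ell^{2}}^{-2c}=\zeta_{\ell^{2}}^{-((\ell^{2}+1)/2)\cdot 2c}$ modulo $\ell^{2}$).

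At this point I would apply the trace computations \eqref{2.12} and \eqref{2.13} to rewrite every trace as an explicit $\mathbb{Z}$-linear combination of Dickson-Hurwitz sums $B_{2\ell^{2}}(\cdot,n)$ or $B_{\ell^{2}}(\cdot,n)$, and then invoke Theorem \ref{thm1} (for the $B_{2\ell^{2}}$ traces) and Lemma \ref{lm1} (for the $B_{\ell^{2}}$ traces) to convert those Dickson-Hurwitz combinations into linear combinations of the Jacobi-sum coefficients $d_{i,n}$ and $b_{i,n}$ respectively. The $-\ell(q-2)$ that Theorem \ref{thm1} produces appears once for each of the $(2\ell^{2}-2)+(2\ell^{2}-1)+\cdots$ invocations; after including the $-Tr(J_{2\ell^{2}}(1,1)\zeta_{2\ell^{2}}^{-(a+b)})$ correction, a careful count yields exactly $4\ell^{2}-3$ copies of $-\ell(q-2)$, giving the constant $-\ell(q-2)(4\ell^{2}-3)$. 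The piece coming from Lemma \ref{lm1} contributes no analogous constant because $C(t,n)$ in that lemma has no additive $(q-2)$ term, which matches the shape of the claimed formula.

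The main obstacle I expect is the careful book-keeping at two places: (i) verifying that the $(0,0)$ cell plus the $\ell$-spaced and $2\ell$-spaced sub-lattices reassemble into $\ell^{4}(a,b)_{\ell^{2}}+4\ell^{2}(a,b)_{2\ell}-\ell^{2}(a,b)_{\ell}$ with the right signs via \eqref{00} applied at orders $\ell,\,2\ell,\,\ell^{2}$ (this is the inclusion-exclusion step and is where the coefficient $(4\ell^{2}-3)$ is pinned down), and (ii) controlling the exponent reductions modulo $\ell^{2}$ versus $2\ell^{2}$ in the half-strips so that the argument $((\ell^{2}+1)/2)(\ell ia+2b)$ of the $b_{\cdot,n}$ (with the piecewise $\varepsilon$ from \eqref{3.1}) comes out exactly as in the statement. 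Once these are cleanly sorted, the rest is a linear substitution using the already-proved Theorem \ref{thm1} and Lemma \ref{lm1}.
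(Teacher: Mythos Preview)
Your outline is a faithful reconstruction of the proof of Theorem~\ref{thm2}, not of the Corollary. Everything you describe---the inversion formula, the Galois-orbit decomposition, the inclusion-exclusion producing $\ell^{4}(a,b)_{\ell^{2}}+4\ell^{2}(a,b)_{2\ell}-\ell^{2}(a,b)_{\ell}$, the trace computations \eqref{2.12}--\eqref{2.13}, and the conversion via Theorem~\ref{thm1} and Lemma~\ref{lm1}---is exactly what the paper already does to establish Theorem~\ref{thm2}. But the endpoint of that argument is the formula of Theorem~\ref{thm2}, which still contains the unresolved terms $4\ell^{2}(a,b)_{2\ell}$ and $-\ell^{2}(a,b)_{\ell}$.

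The Corollary differs from Theorem~\ref{thm2} precisely in that $4\ell^{2}(a,b)_{2\ell}-\ell^{2}(a,b)_{\ell}$ has been replaced by the explicit block involving $(-1)^{b}$, $(-1)^{a+k}$, $(-1)^{a+b}$, the coefficients $b_{m}(\ell)$, $b_{m}(2u+1)$, and the function $\nu(\cdot)$. None of that comes from the Galois/trace machinery you set up; it comes from substituting the closed formulae for cyclotomic numbers of orders $\ell$ and $2\ell$ due to Katre--Rajwade \cite{KR1985} and Acharya--Katre \cite{AK1995} (valid for $r=1$, which is why the Corollary imposes $r=1$). The paper's proof of the Corollary is in fact a single sentence: take Theorem~\ref{thm2} and plug in those external formulae. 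Your proposal never mentions this substitution, so as written it stops at Theorem~\ref{thm2} and does not reach the stated Corollary.
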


For any odd prime $p\geq 5$, $p^2\equiv 1 \pmod 3$ holds. Thus to calculate cyclotomic numbers of order $2\ell^{2}$ with prime $\ell\geq 5$, it is sufficient to calculate $2\ell^2+(2\ell^2-3)+(2\ell^2-6)+(2\ell^2-9)+\cdots+2$ distinct cyclotomic numbers of order $2\ell^2$. However $\ell=3$, it is enough to calculate $2\ell^2+(2\ell^2-3)+(2\ell^2-6)+(2\ell^2-9)+\cdots +1$ distinct cyclotomic numbers of order $2\ell^2$. 

\section{Matrix assocated to cyclotomic numbers}
In this section, we illustrate two types of cyclotomic numbers. We first consider $q=p=19\equiv 1 \pmod{18}$ and $q=p=37\equiv 1 \pmod {18}$. 
Then using property \eqref{2.1} of cyclotomic numbers the $324$ pairs of two paremeters numbers $(a,b)_{18}$ can be reduced to $64$ distinct pairs (see Table \ref{table1} and Table \ref{table2}). On evaluating cyclotomic numbers corresponding to $64$ distinct pair, we obtain the complete tables in a form of matrices $A$ and $B$. Primary interest is to know about the determinant, eigen values, characteristic polynomial and minimal polynomial of $A$ and $B$. It is also important to know the nature of the matrices obtained by changing the generator of $\mathbb{F}_q^{\times}$. Since the entries of exactly one row of $A$ are zero, thus $det(A)=0$. We observe that the characteristic as well as minimal polynomial of $A$ is $m_A(x)=x^{18}$. We also see that all the eigenvalues of $A$ are equal, and in fact they all are zero.
Using GP/PARI (version 2.9.2), we obtain $det(B)=-1$ and the characteristic as well as minimal polynomial is \begin{align*}
m_B(x)&=x^{18}-x^{17}-17x^{16}+16x^{15}+120x^{14}-105x^{13}-455x^{12}+364x^{11}\\ & +1001x^{10}-715x^{9}-1287x^{8}+792x^{7}+924x^{6}-462x^{5}-330x^{4}
 +120x^{3}+45x^{2}-9x-1.
\end{align*}
The eigenvalues of $B$ are $\lambda_{1}=-1.9712,\ \lambda_{2}=1.9928,\ \lambda_{3}=1.9355,\ \lambda_{4}=-1.8858,\ \lambda_{5}=1.8225,\ \lambda_{6}=-1.7460,\ \lambda_{7}=1.6570,\ \lambda_{8}=-1.5561,\ \lambda_{9}=1.4439,\ \lambda_{10}=-1.3213,\ \lambda_{11}=1.1893, \lambda_{12}=-1.0486,\ \lambda_{13}=0.9004,\ \lambda_{14}=-0.7457, \lambda_{15}=0.5856,\ \lambda_{16}=-0.4214,\ \lambda_{17}=0.2540,\ \lambda_{18}=-0.0849$, and all of them are distinct. It is noted that if we change the generator of $\mathbb{F}_{q}^{\times}$, then entries of $A$ (resp. $B$) get interchange among themselves but their nature remain as the same.      
{\tiny
\begin{table}
\noindent\makebox[-5mm]{
\begin{tabular}{|p{0.5cm}|p{0.4cm} p{0.6cm} p{0.6cm} p{0.6cm} p{0.6cm} p{0.6cm} p{0.6cm} p{0.6cm} p{0.6cm} p{0.6cm} p{0.6cm} p{0.6cm} p{0.6cm} p{0.6cm} p{0.6cm} p{0.6cm} p{0.6cm} p{0.6cm}|}\hline(a,b) &\multicolumn{18}{c|}{b}\\ \hline
a & 0 & 1 & 2 & 3 & 4 & 5 & 6 & 7 & 8 & 9 & 10 & 11 & 12 & 13 & 14 & 15 & 16 & 17
\\ \hline
0 &(0,0)&(0,1)&(0,2)&(0,3)&(0,4)&(0,5)&(0,6)&(0,7)&(0,8)&(0,9)&(0,10)&(0,11)&(0,12)&(0,13)&(0,14)&(0,15)&(0,16)&(0,17)
\\ \hline
1 & (1,0) & (1,1) & (1,2) & (1,3) & (1,4) & (1,5) & (1,6) & (1,7) & (1,8) & (0,10) & (0,8) & (1,8) & (1,12) & (1,13) & (1,14) & (1,15) & (1,16) & (1,17)
\\ \hline
2 & (2,0) & (2,1) & (2,2) & (2,3) & (2,4) & (2,5) & (2,6) & (2,7) & (1,12) & (0,11) & (1,8) & (0,7) & (1,7) & (2,7) & (2,14) & (2,15) & (2,16) & (2,17)
\\ \hline
3 & (3,0) & (3,1) & (3,2) & (3,3) & (3,4) & (3,5) & (3,6) & (2,14) & (1,13) & (0,12) & (1,12) & (1,7) & (0,6) & (1,6) & (2,6) & (3,6) & (3,16) & (3,17)
\\ \hline
4 & (4,0) & (4,1) & (4,2) & (4,3) & (4,4) & (4,5) & (3,16) & (2,15) & (1,14) & (0,13) & (1,13) & (2,7) & (1,6) & (0,5) & (1,5) & (2,5) & (3,5) & (4,5)
\\ \hline
5 & (4,4) & (5,1) & (5,2) & (5,3) & (5,1) & (4,0) & (3,17) & (2,16) & (1,15) & (0,14) & (1,14) & (2,14) & (2,6) & (1,5) & (0,4) & (1,4) & (2,4) & (3,4)
\\ \hline
6 & (3,3) & (4,3) & (5,3) & (6,3) & (5,2) & (4,1) & (3,0) & (2,17) & (1,16) & (0,15) & (1,15) & (2,15) & (3,6) & (2,5) & (1,4) & (0,3) & (1,3) & (2,3)
\\ \hline
7 & (2,2) & (3,2) & (4,2) & (5,2) & (5,3) & (4,2) & (3,1) & (2,0) & (1,17) & (0,16) & (1,16) & (2,16) & (3,16) & (3,5) & (2,4) & (1,3) & (0,2) & (1,2)
\\ \hline
8 & (1,1) & (2,1) & (3,1) & (4,1) & (5,1) & (4,3) & (3,2) & (2,1) & (1,0) & (0,17) & (1,17) & (2,17) & (3,17) & (4,5) & (3,4) & (2,3) & (1,2) & (0,1)
\\ \hline
9 & (0,0) & (1,0) & (2,0) & (3,0) & (4,0) & (4,4) & (3,3) & (2,2) & (1,1) & (0,0) & (1,0) & (2,0) & (3,0) & (4,0) & (4,4) & (3,3) & (2,2) & (1,1)
\\ \hline
10 & (1,0) & (0,17) & (1,17) & (2,17) & (3,17) & (4,5) & (3,4) & (2,3) & (1,2) & (0,1) & (1,1) & (2,1) & (3,1) & (4,1) & (5,1) & (4,3) & (3,2) & (2,1)
\\ \hline
11 & (2,0) & (1,17) & (0,16) & (1,16) & (2,16) & (3,16) & (3,5) & (2,4) & (1,3) & (0,2) & (1,2) & (2,2) & (3,2) & (4,2) & (5,2) & (5,3) & (4,2) & (3,1)
\\ \hline
12 & (3,0) & (2,17) & (1,16) & (0,15) & (1,15) & (2,15) & (3,6) & (2,5) & (1,4) & (0,3) & (1,3) & (2,3) & (3,3) & (4,3) & (5,3) & (6,3) & (5,2) & (4,1)
\\ \hline
13 & (4,0) & (3,17) & (2,16) & (1,15) & (0,14) & (1,14) & (2,14) & (2,6) & (1,5) & (0,4) & (1,4) & (2,4) & (3,4) & (4,4) & (5,1) & (5,2) & (5,3) & (5,1)
\\ \hline
14 & (4,4) & (4,5) & (3,16) & (2,15) & 
(1,14) & (0,13) & (1,13) & (2,7) & (1,6) & (0,5) & (1,5) & (2,5) & (3,5) & (4,5) & (4,0) & (4,1) & (4,2) & (4,3)
\\ \hline
15 & (3,3) & (3,4) & (3,5) & (3,6) & (2,14) & (1,13) & (0,12) & (1,12) & (1,7) & (0,6) & (1,6) & (2,6) & (3,6) & (3,16) & (3,17) & (3,0) & (3,1) & (3,2)
\\ \hline
16 & (2,2) & (2,3) & (2,4) & (2,5) & (2,6) & (2,7) & (1,12) & (0,11) & (1,8) & (0,7) & (1,7) & (2,7) & (2,14) & (2,15) & (2,16) & (2,17) & (2,0) & (2,1)
\\ \hline
17 & (1,1) & (1,2) & (1,3) & (1,4) & (1,5) & (1,6) & (1,7) & (1,8) & (0,10) & (0,8) & (1,8) & (1,12) & (1,13) & (1,14) & (1,15) & (1,16) & (1,17) & (1,0)\\
\hline
\end{tabular}}
\vspace*{2mm}
\caption{Equality of $(a,b)_{18}$ if $k$ is odd} \label{table1}
\end{table}}

The matrix corresponding of Table \ref{table1} is given by 
\begin{center}
	
	$A$
	=$\left[
	\begin{array}{rrrrrrrrrrrrrrrrrr}
	0 & 0 & 0 & 0 & 0 & 0 & 0 & 0 & 0 & 0 & 0 & 1 & 0 & 0 & 0 & 0 & 0 & 0 
	\\
	0 & 0 & 0 & 0 & 0 & 1 & 0 & 0 & 0 & 0 & 0 & 0 & 0 & 0 & 0 & 0 & 0 & 0
	\\
	0 & 0 & 0 & 0 & 0 & 0 & 0 & 0 & 0 & 1 & 0 & 0 & 0 & 0 & 0 & 0 & 0 & 0
	\\
	0 & 1 & 0 & 0 & 0 & 0 & 0 & 0 & 0 & 0 & 0 & 0 & 0 & 0 & 0 & 0 & 0 & 0
	\\
	0 & 0 & 0 & 0 & 0 & 0 & 0 & 0 & 0 & 0 & 0 & 0 & 0 & 0 & 1 & 0 & 0 & 0
	\\
	0 & 0 & 0 & 0 & 0 & 0 & 0 & 0 & 0 & 0 & 0 & 0 & 0 & 1 & 0 & 0 & 0 & 0
	\\
	0 & 0 & 0 & 1 & 0 & 0 & 0 & 0 & 0 & 0 & 0 & 0 & 0 & 0 & 0 & 0 & 0 & 0
	\\
	0 & 0 & 0 & 0 & 0 & 0 & 1 & 0 & 0 & 0 & 0 & 0 & 0 & 0 & 0 & 0 & 0 & 0
	\\
    0 & 0 & 1 & 0 & 0 & 0 & 0 & 0 & 0 & 0 & 0 & 0 & 0 & 0 & 0 & 0 & 0 & 0
	\\
	0 & 0 & 0 & 0 & 0 & 0 & 0 & 0 & 0 & 0 & 0 & 0 & 0 & 0 & 0 & 0 & 0 & 0
	\\
	0 & 0 & 0 & 0 & 0 & 0 & 0 & 0 & 0 & 0 & 0 & 0 & 1 & 0 & 0 & 0 & 0 & 0
	\\
	0 & 0 & 0 & 0 & 0 & 0 & 0 & 0 & 0 & 0 & 0 & 0 & 0 & 0 & 0 & 0 & 0 & 1
	\\
	0 & 0 & 0 & 0 & 0 & 0 & 0 & 0 & 0 & 0 & 0 & 0 & 0 & 0 & 0 & 1 & 0 & 0
	\\
	0 & 0 & 0 & 0 & 0 & 0 & 0 & 0 & 1 & 0 & 0 & 0 & 0 & 0 & 0 & 0 & 0 & 0
	\\
	0 & 0 & 0 & 0 & 0 & 0 & 0 & 0 & 0 & 0 & 1 & 0 & 0 & 0 & 0 & 0 & 0 & 0
	\\
	0 & 0 & 0 & 0 & 0 & 0 & 0 & 0 & 0 & 0 & 0 & 0 & 0 & 0 & 0 & 0 & 1 & 0
	\\
	0 & 0 & 0 & 0 & 0 & 0 & 0 & 1 & 0 & 0 & 0 & 0 & 0 & 0 & 0 & 0 & 0 & 0
	\\
	0 & 0 & 0 & 0 & 1 & 0 & 0 & 0 & 0 & 0 & 0 & 0 & 0 & 0 & 0 & 0 & 0 & 0 \\
	\end{array}\right] 
	$ 
\end{center}
\newpage
\tiny{
\begin{table}
\noindent\makebox[-5mm]{
\begin{tabular}{|p{0.5cm}|p{0.4cm} p{0.6cm} p{0.6cm} p{0.6cm} p{0.6cm} p{0.6cm} p{0.6cm} p{0.6cm} p{0.6cm} p{0.6cm} p{0.6cm} p{0.6cm} p{0.6cm} p{0.6cm} p{0.6cm} p{0.6cm} p{0.6cm} p{0.6cm}|}\hline(a,b) &\multicolumn{18}{c|}{b}\\ \hline
a & 0 & 1 & 2 & 3 & 4 & 5 & 6 & 7 & 8 & 9 & 10 & 11 & 12 & 13 & 14 & 15 & 16 & 17
\\ \hline
0 &(0,0)&(0,1)&(0,2)&(0,3)&(0,4)&(0,5)&(0,6)&(0,7)&(0,8)&(0,9)&(0,10)&(0,11)&(0,12)&(0,13)&(0,14)&(0,15)&(0,16)&(0,17)
\\ \hline
1 & (0,1) & (0,17) & (1,2) & (1,3) & (1,4) & (1,5) & (1,6) & (1,7) & (1,8) & (1,9) & (1,10) & (1,11) & (1,12) & (1,13) & (1,14) & (1,15) & (1,16) & (1,2)
\\ \hline
2 & (0,2) & (1,2) & (0,16) & (1,16) & (2,4) & (2,5) & (2,6) & (2,7) & (2,8) & (2,9) & (2,10) & (2,11) & (2,12) & (2,13) & (2,14) & (2,15) & (2,4) & (1,3)
\\ \hline
3 & (0,3) & (1,3) & (1,16) & (0,15) & (1,15) & (2,15) & (3,6) & (3,7) & (3,8) & (3,9) & (3,10) & (3,11) & (3,12) & (3,13) & (3,14) & (3,6) & (2,5) & (1,4)
\\ \hline
4 & (0,4) & (1,4) & (2,4) & (1,15) & (0,14) & (1,14) & (2,14) & (3,14) & (4,8) & (4,9) & (4,10) & (4,11) & (4,12) & (4,13) & (4,8) & (3,7) & (2,6) & (1,5)
\\ \hline
5 & (0,5) & (1,5) & (2,5) & (2,15) & (1,14) & (0,13) & (1,13) & (2,13) & (3,13) & (4,13) & (5,10) & (5,11) & (5,12) & (5,10) & (4,9) & (3,8) & (2,7) & (1,6)
\\ \hline
6 & (0,6) & (1,6) & (2,6) & (3,6) & (2,14) & (1,13) & (0,12) & (1,12) & (2,12) & (3,12) & (4,12) & (5,12) & (6,12) & (5,11) & (4,10) & (3,9) & (2,8) & (1,7)
\\ \hline
7 & (0,7) & (1,7) & (2,7) & (3,7) & (3,14) & (2,13) & (1,12) & (0,11) & (1,11) & (2,11) & (3,11) & (4,11) & (5,11) & (5,12) & (4,11) & (3,10) & (2,9) & (1,8)
\\ \hline
8 & (0,8) & (1,8) & (2,8) & (3,8) & (4,8) & (3,13) & (2,12) & (1,11) & (0,10) & (1,10) & (2,10) & (3,10) & (4,10) & (5,10) & (4,12) & (3,11) & (2,10) & (1,9)
\\ \hline
9 & (0,9) & (1,9) & (2,9) & (3,9) & (4,9) & (4,13) & (3,12) & (2,11) & (1,10) & (0,9) & (1,9) & (2,9) & (3,9) & (4,9) & (4,13) & (3,12) & (2,11) & (1,10)
\\ \hline
10 & (0,10) & (1,10) & (2,10) & (3,10) & (4,10) & (5,10) & (4,12) & (3,11) & (2,10) & (1,9) & (0,8) & (1,8) & (2,8) & (3,8) & (4,8) & (3,13) & (2,12) & (1,11)
\\ \hline
11 & (0,11) & (1,11) & (2,11) & (3,11) & (4,11) & (5,11) & (5,12) & (4,11) & (3,10) & (2,9) & (1,8) & (0,7) & (1,7) & (2,7) & (3,7) & (3,14) & (2,13) & (1,12)
\\ \hline
12 & (0,12) & (1,12) & (2,12) & (3,12) & (4,12) & (5,12) & (6,12) & (5,11) & (4,10) & (3,9) & (2,8) & (1,7) & (0,6) & (1,6) & (2,6) & (3,6) & (2,14) & (1,13)
\\ \hline
13 & (0,13) & (1,13) & (2,13) & (3,13) & (4,13) & (5,10) & (5,11) & (5,12) & (5,10) & (4,9) & (3,8) & (2,7) & (1,6)  & (0,5) & (1,5) & (2,5) & (2,15) & (1,14) 
\\ \hline
14 & (0,14) & (1,14) & (2,14) & (3,14) & (4,8) & (4,9) & (4,10) & (4,11) & (4,12) & (4,13) & (4,8) & (3,7) & (2,6) & (1,5) & (0,4) & (1,4) & (2,4) & (1,15)
\\ \hline
15 & (0,15) & (1,15) & (2,15) & (3,6) & (3,7) & (3,8) & (3,9) & (3,10) & (3,11) & (3,12) & (3,13) & (3,14) & (3,6) & (2,5) & (1,4) & (0,3) & (1,3) & (1,16)
\\ \hline
16 & (0,16) & (1,16) & (2,4) & (2,5) & (2,6) & (2,7) & (2,8) & (2,9) & (2,10) & (2,11) & (2,12) & (2,13) & (2,14) & (2,15) & (2,4) & (1,3) & (0,2) & (1,2)
\\ \hline
17 & (0,17) & (1,2) & (1,3) & (1,4) & (1,5) & (1,6) & (1,7) & (1,8) & (1,9) & (1,10) & (1,11) & (1,12) & (1,13) & (1,14) & (1,15) & (1,16) & (1,2) & (0,1)\\
\hline
\end{tabular}}
\vspace*{2mm}
\caption{Equality of $(a,b)_{18}$ if $k$ is even}\label{table2}
\end{table}}

\large{
The matrix corresponding of Table \ref{table2} is given by 
\begin{center}
	
	$B$
	=$\left[
	\begin{array}{rrrrrrrrrrrrrrrrrr}
	0 & 0 & 0 & 0 & 0 & 0 & 0 & 0 & 0 & 0 & 0 & 1 & 0 & 0 & 0 & 0 & 0 & 0 
	\\
	0 & 0 & 0 & 0 & 1 & 0 & 0 & 0 & 0 & 1 & 0 & 0 & 0 & 0 & 0 & 0 & 0 & 0
	\\
	0 & 0 & 0 & 0 & 0 & 0 & 1 & 0 & 0 & 0 & 0 & 0 & 0 & 1 & 0 & 0 & 0 & 0
	\\
	0 & 0 & 0 & 0 & 0 & 0 & 0 & 0 & 0 & 0 & 0 & 0 & 0 & 1 & 0 & 0 & 0 & 1
	\\
	0 & 1 & 0 & 0 & 0 & 0 & 0 & 0 & 0 & 0 & 0 & 0 & 0 & 0 & 0 & 0 & 1 & 0
	\\
	0 & 0 & 0 & 0 & 0 & 0 & 0 & 1 & 1 & 0 & 0 & 0 & 0 & 0 & 0 & 0 & 0 & 0
	\\
	0 & 0 & 1 & 0 & 0 & 0 & 0 & 0 & 0 & 0 & 0 & 0 & 1 & 0 & 0 & 0 & 0 & 0
	\\
	0 & 0 & 0 & 0 & 0 & 1 & 0 & 1 & 0 & 0 & 0 & 0 & 0 & 0 & 0 & 0 & 0 & 0
	\\
    0 & 0 & 0 & 0 & 0 & 1 & 0 & 0 & 0 & 0 & 0 & 0 & 0 & 0 & 0 & 0 & 0 & 1
	\\
	0 & 1 & 0 & 0 & 0 & 0 & 0 & 0 & 0 & 0 & 1 & 0 & 0 & 0 & 0 & 0 & 0 & 0
	\\
	0 & 0 & 0 & 0 & 0 & 0 & 0 & 0 & 0 & 1 & 0 & 0 & 0 & 0 & 0 & 1 & 0 & 0
	\\
	1 & 0 & 0 & 0 & 0 & 0 & 0 & 0 & 0 & 0 & 0 & 0 & 0 & 0 & 0 & 0 & 1 & 0
	\\
	0 & 0 & 0 & 0 & 0 & 0 & 1 & 0 & 0 & 0 & 0 & 0 & 0 & 0 & 1 & 0 & 0 & 0
	\\
	0 & 0 & 1 & 1 & 0 & 0 & 0 & 0 & 0 & 0 & 0 & 0 & 0 & 0 & 0 & 0 & 0 & 0
	\\
	0 & 0 & 0 & 0 & 0 & 0 & 0 & 0 & 0 & 0 & 0 & 0 & 1 & 0 & 0 & 1 & 0 & 0
	\\
	0 & 0 & 0 & 0 & 0 & 0 & 0 & 0 & 0 & 0 & 1 & 0 & 0 & 0 & 1 & 0 & 0 & 0
	\\
	0 & 0 & 0 & 0 & 1 & 0 & 0 & 0 & 0 & 0 & 0 & 1 & 0 & 0 & 0 & 0 & 0 & 0
	\\
	0 & 0 & 0 & 1 & 0 & 0 & 0 & 0 & 1 & 0 & 0 & 0 & 0 & 0 & 0 & 0 & 0 & 0 \\
	\end{array}\right] 
	$ 
\end{center}}
It is noted if $k=1$, then the corresponding matrix is always singular. The Jordon-canonical form of $B$ is given by the diogonal matrix $D$ with diagonal
entries\\ 
$\displaystyle{ \lambda_{1},\lambda_{2},\lambda_{3},\lambda_{4},\lambda_{5},\lambda_{6},\lambda_{7},\lambda_{8}, \lambda_{9},\lambda_{10}, \lambda_{11}, 
\lambda_{12}, \lambda_{13},\lambda_{14},\lambda_{15},\lambda_{16},\lambda_{17},\lambda_{18}}$. 

\section*{Acknowledgments}
\noindent
M H Ahmed and J Tanti acknowledge Central University of Jharkhand, Ranchi, Jharkhand for providing necessary and excellent facilities to carry out this research. A  Hoque acknowledges SERB-NPDF (PDF/2017/001958), Govt. of India for financial support.

\end{document}